\newtheorem{theorem}{Theorem}[section]
\newtheorem{lemma}[theorem]{Lemma}
\newtheorem{corollary}[theorem]{Corollary}
\newtheorem{prop}[theorem]{Proposition}
\newtheorem*{theorem*}{Theorem}{\bf}{\it}
\newtheorem*{proposition*}{Proposition}{\bf}{\it}
\newtheorem*{observation*}{Observation}{\bf}{\it}
\newtheorem*{lemma*}{Lemma}{\bf}{\it}
\theoremstyle{definition}
\newtheorem{example}[theorem]{Example}
\theoremstyle{remark}
\newtheorem{remark}[theorem]{Remark}
\newcommand{\F}{\mathcal F}
\newcommand{\Z}{\mathbb Z}
\newcommand{\R}{\mathbb R}
\newcommand{\C}{\mathbb C}
\newcommand{\dv}{{\rm{div}}}
\renewcommand{\Re}{\mathrm{Re}}
\begin{document}
\title{On ratios of harmonic functions}
\author{Alexander Logunov}
\email{log239@yandex.ru}
\author{Eugenia Malinnikova}
\email{eugenia@math.ntnu.no}
\keywords{Harmonic functions, Nodal set, Boundary Harnack principle, Maximum principle, Gradient estimates, Harmonic polynomials}
\subjclass[2010]{31B05, 35B50, 35B09}

\begin{abstract}
Let $u$ and $v$ be harmonic in $\Omega \subset \R^n$ functions with the same zero set $Z$.
We show that the ratio $f$ of such functions is always well-defined and is real analytic. Moreover it satisfies the maximum and minimum principles. For $n=3$ we also prove the Harnack inequality and the gradient estimate for the ratios of harmonic functions, namely ${  \sup\limits_{K} |f| \leq C \inf\limits_{K}| f| \quad \& \quad    \sup\limits_{K} |\nabla f| \leq C \inf\limits_{K}| f|  }$  for any compact subset $K$ of $\Omega$, where the constant $C$ depends  on $K$, $Z$, $\Omega$ only. In dimension two the first inequality follows from the boundary Harnack principle and the second  from  the gradient estimate recently obtained  by Mangoubi. It is an open question whether these inequalities remain true in higher dimensions ($n \geq 4$). 
\end{abstract}

\maketitle
\section{Introduction}

\subsection{Motivation and main results}
The classical Harnack principle and it's corollary claim that if $K$ is a compact subset of a domain $\Omega$, then there exists a  positive constant $C$ such that for any positive and harmonic in $\Omega$ function $u$  
$$\inf\limits_{K} u\geq C \sup\limits_{K} u \quad \& \quad \inf\limits_{K} u \geq C   \sup\limits_{K} \left|\nabla u\right|.$$
 
 In order to find a proper extension of this principle to harmonic functions changing a sign we consider the ratios of harmonic functions sharing the same zero set.
Let $u$ and $v$ be harmonic functions in $\Omega \subset \R^n$ that vanish at exactly the same set $Z\subset \Omega$, we call this set the nodal set of $u$ (and $v$) and write $Z=Z(u)$. We study the ratio $f=u/v$. 
For general real analytic functions having the same set of real zeros the ratio is not always well-defined, there are also examples when the ratio is a continuous but not differentiable function. The situation changes  when we assume that the functions are harmonic. Our first result, so called local division theorem, says that the ratio of two harmonic functions with common nodal set is real analytic. It implies that if real zeros of two harmonic functions coincide then their complex zeros also coincide in some neighborhood of the real plane.

 Our second result says that for $n= 3$ there exists $C=C(Z,K,\Omega)$ such that
\begin{equation}\label{H2}
(a)\ \inf\limits_{K}| f|\geq C \sup\limits_{K} |f| \quad \& \quad (b)\ \inf\limits_{K}| f|  \geq C   \sup\limits_{K} |\nabla f|,
\end{equation}
when $f=u/v$ and $u$ and $v$ are harmonic functions satisfying $Z(u)=Z(v)=Z$.
 If $Z$ is the empty set, the last statement follows from the classical Harnack principle.

 Ratios of positive harmonic functions frequently appear in classical potential theory, in particular in connection with the Martin boundary and the (boundary) Harnack principle (see for example \cite{AG},\cite{A}) and the Green function (3G inequalities, see \cite{CFZ}). 
Our interest in the ratios of harmonic functions changing a sign grew up from studying the recent work of Dan Mangoubi, \cite{M}. 
The following   result  was  obtained in \cite{M} in dimension two.
\begin{theorem*}[Mangoubi] Let $Z\subset B_2=\{x:|x|<2\}\subset \R^2$, denote 
\[
\F(Z)=\left\{u:B_2\rightarrow \R, \Delta u=0, Z(u)=Z\right\}.\]
Then for any $u,v\in\F(Z)$ the ratio $f=u/v$ extends to a smooth nowhere vanishing function in $B_2$ and there exists a constant $C_Z>0$ such that $|\nabla \log |f||\le C_Z$ in $B_1$.
\end{theorem*}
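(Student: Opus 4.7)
My plan is to derive the theorem from the local division theorem together with a compactness argument. The division theorem already tells us that $f=u/v$ extends to a real analytic (hence smooth) function on $B_2$; it therefore remains to show that $f$ is nowhere vanishing and that the resulting estimate on $|\nabla\log|f||$ can be made uniform across the family $\F(Z)$.

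For nonvanishing I would exploit the complex analytic structure special to $n=2$. Near any $z_0\in Z$ write $u=\mathrm{Re}\,U$ with $U$ holomorphic; if $U$ has a zero of order $k$ at $z_0$, a conformal change of coordinate $w$ brings $U$ to the form $w^k$, so that $u=\mathrm{Re}(w^k)$ and $Z$ is locally $k$ analytic lines through $z_0$. The function $v$ pulls back to a harmonic function of $w$ whose lowest-order homogeneous term at $w=0$ must be a degree-$k$ real harmonic polynomial vanishing on exactly those $k$ lines; such polynomials form the one-dimensional space spanned by $\mathrm{Re}(w^k)$, hence $v=c\,\mathrm{Re}(w^k)+O(|w|^{k+1})$ for some real $c\ne 0$, and $f(z_0)=1/c\ne 0$. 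Off $Z$ neither $u$ nor $v$ vanishes, so $f$ is nowhere zero on $B_2$. This step simultaneously gives the expansion $f=1/c+O(|w|)$ at each critical nodal point, which will be useful for the gradient bound there.

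For the uniform bound, I would use that $\nabla\log|f|$ is invariant under rescaling $(u,v)\mapsto(\lambda u,\mu v)$. After normalizing $\|u\|_{L^\infty(B_{3/2})}=\|v\|_{L^\infty(B_{3/2})}=1$, any sequence $(u_n,v_n)\in\F(Z)^2$ admits a $C^\infty_{\mathrm{loc}}$-subsequential limit $(u_\infty,v_\infty)$ of harmonic functions with $Z(u_\infty),Z(v_\infty)\subseteq Z$. If $(u_\infty,v_\infty)\in\F(Z)^2$, then the smoothness and nonvanishing of $f_\infty=u_\infty/v_\infty$ forces $\|\nabla\log|f_n|\|_{L^\infty(B_1)}$ to remain bounded, contradicting any hypothetical blow-up. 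In the three natural regimes one checks the bound directly for $f_\infty$: away from $Z$, classical Harnack controls $|v_\infty|$ from below and $|\nabla u_\infty|,|\nabla v_\infty|$ from above; at a regular point of $Z$, the boundary Harnack principle on each component of $\{u_\infty>0\}\cap B_{3/2}$ together with the real analyticity of $f_\infty$ provides a $C^1$ bound up to $Z$; and at the finitely many critical points of $u_\infty$ on $Z$ the expansion from step two supplies the bound.

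The main obstacle is the preservation of the nodal set under limits: one must exclude the possibility that $u_\infty$ (or $v_\infty$) vanishes identically on a nodal component of $Z$, which would shrink the nodal set strictly. This would be addressed by a quantitative boundary Harnack estimate showing that the ratio of two positive harmonic functions on a fixed nodal domain, with common vanishing on its boundary, is bounded below by a constant depending only on the geometry of the domain—hence only on $Z$—so the normalized pairs cannot degenerate in this way. Making this precise, so that limits genuinely stay in $\F(Z)^2$, is the delicate heart of the argument.
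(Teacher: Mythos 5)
Your route differs substantially both from Mangoubi's original argument (which works directly with an elliptic equation for $\log f$) and from the machinery this paper develops; specialized to $n=2$, the paper's framework establishes the bound via the power-series coefficient estimate of Lemma~\ref{estimate} combined with the Lipschitz-domain boundary Harnack principle and the maximum/minimum principle, with no normal-families argument at all. Your first two steps are correct: real analyticity of $f$ follows from the local division theorem, and at a critical nodal point a conformal normalization $U=w^k$ forces $Z$ to be $k$ lines, whence (using Lemma~\ref{l3} in both directions) the lowest-order term of $v$ in the $w$-variable is $c\,\Re(w^k)$ with $c\neq 0$, so $f(z_0)=1/c\neq 0$.

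The compactness step, however, has a genuine gap beyond the nodal-set-preservation issue you flag. Even granting $(u_\infty,v_\infty)\in\F(Z)^2$, the assertion that $C^\infty_{\mathrm{loc}}$ convergence of $(u_n,v_n)$ forces $\|\nabla\log|f_n|\|_{L^\infty(B_1)}$ to stay bounded is not automatic. Away from $Z$ it follows from classical Harnack; but near a point $z_0\in Z$ the expression $\nabla f_n=(v_n\nabla u_n-u_n\nabla v_n)/v_n^2$ is $0/0$, and smooth convergence of the numerator and denominator separately does not give uniform convergence of the ratio up to $Z$. What is needed is a quantitative, $n$-uniform control of the Taylor coefficients of $f_n$ at the points of $Z$, which is precisely the content of Lemma~\ref{estimate}, applicable once one knows that the leading Taylor coefficient of $v_n$ at each $z_0\in Z$ stays bounded away from zero. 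That lower bound is itself the Harnack inequality for the ratio $v_n/w$ against a fixed comparison function $w$ (Theorem~\ref{th:H} in dimension two, from BHP). Without such a quantitative input the contradiction you seek cannot be reached.

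There is also a structural redundancy: the ``quantitative boundary Harnack estimate'' you invoke to prevent degeneration of the nodal set already is the Harnack inequality for ratios, and once it is in hand, the argument of Lemma~\ref{l5} converts it directly into the uniform derivative bound, so the normal-families detour becomes superfluous. Your plan can in principle be completed, but only by re-importing the same two ingredients (the BHP-type Harnack for $f$ and the power-series coefficient estimate) that the direct proof uses; it is therefore not a genuinely independent route, and as currently written the convergence-near-$Z$ step and the nodal-set preservation are both unproved.
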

We refer the reader to \cite{M} for motivation of the problem, its connection to Li-Yau's gradient estimate, and a list of examples of harmonic functions sharing the zero set.

 A connected component $\Omega$ of $B\setminus Z$ is called a nodal domain.  In dimension two $\partial \Omega \cap B$ can be represented locally as a graph of a Lipschitz function, then  one can apply the boundary Harnack principle for Lipschitz domains (see for example \cite{AG}) to see that $u=fv$ for some locally bounded function $f$ that does not change the sign. In higher dimensions the geometry of the nodal domains can be much more complicated. We give an example illustrating  that already in dimension three the nodal domains may violate the Harnack chain condition. Thus there exists a harmonic function $v$ such that $B\setminus Z(v)$ has components that are not NTA domains (see \cite{JK} for the definition); this creates an obstacle for the direct application of the boundary Harnack principle, which would have implied the boundedness of $f$. However by the local division theorem the ratio $f$ is always defined and is real analytic.  Moreover our proof of the local division result shows that the Harnack principle for the ratios  (\ref{H2}a) implies the gradient estimate. Thus the main problem in generalizing Mangoubi's theorem to higher dimensions is in establishing (\ref{H2}a). We are able to prove this inequality only  in dimension three, for this case the structure of the critical set of a harmonic function (where the function and its gradient simultaneously vanish) is less complicated than in higher dimensions.  We prove the following result, which will be referred to as the Harnack inequality for the ratios of harmonic functions.

\begin{theorem} \label{th:H} Assume that $w$ is a harmonic function in the unit ball $B\subset\R^3$. For any compact subset $K\subset B$ there exists a constant $C$ that depends on $w$ and $K$ only such that for any harmonic functions $u,v$ in $B$ such that $Z(u)=Z(v)=Z(w)$ and any points $x,y\in K$ we have
\[
\left|\frac{u}{v}(x)\right|\le C\left|\frac{u}{v}(y)\right|.\] 
\end{theorem}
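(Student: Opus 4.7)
The plan is to reduce the global inequality to a collection of local estimates and then chain them. Since $K$ is compact, it suffices to show that every point $p$ in a neighborhood of $K$ admits a small ball $B_p$ on which $\sup_{B_p}|f|\le C_p \inf_{B_p}|f|$ uniformly in admissible pairs $(u,v)$, where $C_p$ depends only on $w$ and $p$. Covering $K$ by finitely many such balls and chaining the estimates via classical interior Harnack on the overlaps then yields the theorem.

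A preliminary remark is that $f=u/v$ is real analytic by the local division theorem proved earlier in the paper, and where $v\ne 0$ the identity $\dv(v^{2}\nabla f)=0$ holds, a divergence-form equation uniformly elliptic away from the zero set. I would split points $p$ into three classes. If $p\notin Z(w)$, the equation is uniformly elliptic on a fixed ball about $p$ and the classical interior Harnack inequality suffices (after normalizing $v$, which is harmless since $f$ is scale-invariant in $v$). If $p\in Z(w)\setminus\Sigma(w)$, where $\Sigma(w):=\{w=0,\,\nabla w=0\}$, then $Z(w)$ is locally a smooth real-analytic hypersurface cutting a small ball into two smooth nodal pieces on which $u$ and $v$ each retain a sign, and the classical boundary Harnack principle for $C^{1,\alpha}$ domains delivers the two-sided bound on $u/v$.

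The substantive case, and the main obstacle, is a critical point $p\in\Sigma(w)$. In $\mathbb{R}^{3}$ the singular set $\Sigma(w)$ is at most one-dimensional and its local geometry is controlled by the leading nonzero homogeneous harmonic polynomial $P_N$ in the Taylor expansion of $w$ at $p$. I would argue by contradiction via a blow-up/compactness scheme: if the local Harnack estimate failed at $p$, there would exist admissible sequences $(u_n,v_n)$ and points $x_n,y_n\to p$ with $|f_n(x_n)|/|f_n(y_n)|\to\infty$. Rescaling about $p$ and normalizing $f_n$, the real analyticity provided by the local division theorem together with uniform bounds coming from the two easy cases above on intermediate annular scales should extract a subsequential limit $f_\infty$ realized as a ratio of two homogeneous harmonic polynomials in $\mathbb{R}^{3}$ with common nodal set $Z(P_N)$. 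Exploiting the tractable structure of nodal sets of harmonic polynomials in three variables on $S^{2}$, one classifies such pairs and shows $f_\infty$ is uniformly bounded and bounded away from zero on compact subsets, contradicting the assumed blow-up.

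The essence of the three-dimensional restriction enters exactly in this final step: the one-dimensionality of $\Sigma(w)$ and the manageable list of nodal configurations for harmonic polynomials in three variables make the classification feasible, while both become qualitatively more complex for $n\ge 4$, which is precisely why the theorem is open there.
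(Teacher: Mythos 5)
Your cases away from the critical set are fine (at a regular nodal point the boundary Harnack principle for smooth domains does give a bound uniform in the admissible pair, and off $Z$ the classical Harnack inequality applied to $u$ and $v$ separately suffices), but the substantive case $p\in\Sigma(w)$ contains a genuine gap. The blow-up/compactness scheme needs, before anything else, uniform local bounds on the normalized ratios $f_n$ (equivalently, normal-family compactness for harmonic functions with the prescribed zero set) in order to extract any subsequential limit; this is precisely what is not available at this stage. Indeed, the paper records the compactness of $\{u:\ Z(u)=Z,\ u(0)=1\}$ only as a conjecture, and in dimension $3$ deduces it \emph{from} the Harnack inequality for ratios, not the other way around, so invoking it here is circular. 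The bounds you propose to import ``from the two easy cases on intermediate annular scales'' are not uniform: the boundary Harnack constant at a point of $Z\setminus\Sigma(w)$ degenerates as that point approaches $\Sigma(w)$ (the Lipschitz character of the nodal surface deteriorates), and after rescaling about $p\in\Sigma(w)$ the rescaled critical set still runs through the unit-scale region, so no intermediate annulus is uniformly ``easy''. Note also that the classification step you defer to is actually trivial: by the division lemma two homogeneous harmonic polynomials with the same zero set divide each other and are proportional, so any limit $f_\infty$ would be constant. The entire difficulty is in justifying convergence of $f_n$ up to the nodal and critical sets, which is itself a Harnack-type statement — so the contradiction scheme presupposes what it is meant to prove.

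For contrast, the paper never performs a blow-up at critical points. It proves a structure lemma (Lemma \ref{l:good}): near a point of the critical curve where the vanishing order is locally constant, \emph{every} nodal domain is a Lipschitz domain (there are $2k$ of them, not two), so such critical points are handled by the boundary Harnack principle exactly like regular nodal points; the remaining ``bad'' points form a countable set with locally finitely many accumulation points (Corollary \ref{count}). Given a nodal point $x_0$, one then chooses a sphere $S$ around $x_0$ meeting $Z$ only at good points, applies the boundary Harnack principle on each nodal domain meeting $S$ and multiplies the constants to get $\max_S|f|\le C\min_S|f|$, and — this is the mechanism your argument lacks — uses the maximum and minimum principle for ratios of harmonic functions (proved via the local division theorem) to propagate this two-sided bound from $S$ to the entire ball it bounds. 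In this way the bad points and the critical set are never analyzed locally at all. Unless you either supply an independent compactness/monotonicity input for the blow-up or replace it by such a propagation argument, your case 3 does not close.
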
  

 This result combined with the local division argument gives estimates for the derivatives of the ratios of harmonic functions. 
\begin{theorem}\label{th:full}
Suppose $u$ and $v$ are harmonic functions in $\Omega\subset \mathbb{R}^3$. If $Z(v) = Z(u)=Z$, then there exists a real analytic function $f$ such that $u=vf$. If we fix $x_0 \in \Omega$ and assume $\frac{u}{v}(x_0)=1$, then for any compact set $K\subset \Omega$ there exist  positive numbers $A$ and $R$ depending only on $K, Z$ and $\Omega$ such that for all $x \in K$ and any multi-index $\alpha$ 
$$\left|D^{\alpha} \left(\frac{u}{v}\right)(x)\right|\leq \alpha!A R^{|\alpha|}.$$
 \end{theorem}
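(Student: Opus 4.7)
The existence of a real analytic $f$ with $u=vf$ is the local division theorem already stated in the introduction; only the derivative estimate requires proof. The strategy is to realize $f$ as a holomorphic function on a complex neighborhood of $K$ in $\C^3$ and then to apply Cauchy's inequality on a polydisk. Since $u$ and $v$ are harmonic, they extend holomorphically to a fixed complex neighborhood of $\Omega$, and the proof of the local division theorem tells us that their complex zero sets coincide near the real plane. Consequently $f=u/v$ has only removable singularities along this common complex zero set and extends to a function holomorphic on a complex polydisk around each real point $p\in\Omega$.

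The concrete goal is to produce a radius $r>0$ and a constant $M$, depending only on $K$, $Z$, $\Omega$, and $x_0$, such that for every $p\in K$ the holomorphic extension of $f$ is defined and bounded by $M$ on the polydisk $P(p,r)\subset\C^3$. Cauchy's estimate on this polydisk then yields
\[
|D^\alpha f(p)|\leq\frac{\alpha!\,M}{r^{|\alpha|}},
\]
which is the asserted bound with $A=M$ and $R=1/r$. A uniform radius $r$ is obtained by a compactness argument: the local division theorem provides some $r_p>0$ depending continuously on $p$, and one takes the infimum over the compact set $K$. A uniform bound $M$ comes from the Harnack inequality for ratios (Theorem~\ref{th:H}): combined with the normalization $f(x_0)=1$ and a chain of balls joining $x_0$ to $K$ inside $\Omega$, it produces a uniform real bound $|f|\leq C_1$ on a compact neighborhood of $K$, with $C_1$ depending only on $K$, $Z$, $\Omega$, and $x_0$.

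The main obstacle is passing from this real $L^\infty$ bound on $f$ to a bound on its holomorphic extension over the complex polydisk. Such an estimate is not automatic, because $f$ is a quotient whose denominator vanishes on the complex hypersurface $Z_\C(v)$; one needs an effective form of Riemann's removable singularity theorem adapted to the geometry of harmonic zero sets, so that the quantitative extension furnished by the local division argument can be bootstrapped into a uniform estimate. In dimension three the critical set of a harmonic function has a tractable structure---the same structural ingredient that underlies the proof of Theorem~\ref{th:H}---and this is what makes the uniform extension possible. Once the bounded holomorphic extension on a uniform polydisk is in hand, the derivative estimate follows by Cauchy's inequality and the proof is complete.
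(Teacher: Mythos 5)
Your high-level strategy — a uniform $L^\infty$ bound on $f$ coming from Theorem~\ref{th:H}, followed by a Cauchy-type estimate on a uniform polydisk — is close in spirit to what the paper does, but you have identified the crux and then left it unresolved. You write that the main obstacle is "passing from this real $L^\infty$ bound on $f$ to a bound on its holomorphic extension over the complex polydisk," and you suggest that some "effective form of Riemann's removable singularity theorem adapted to the geometry of harmonic zero sets" is what is needed. That is not how the paper closes the gap, and no such removable-singularity result is produced in your argument, so the proof as written is incomplete at precisely the step that carries the real content.

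The paper's mechanism is entirely elementary and avoids speaking about removable singularities of the complex extension of $f$. It is the formal power series estimate Lemma~\ref{estimate}: if the Taylor coefficients of $u$ and $v$ at a point satisfy $|u_\alpha|,|v_\alpha|\le a r^{|\alpha|}$ and, after a rotation, the leading coefficient $|v_{(k,0,\dots,0)}|=c>0$, then the coefficients of $f=u/v$ satisfy $|f_\beta|\le A R^\beta$ with $A,R$ depending only on $a,c,r,k,n$. To apply this uniformly over $u,v\in\F_0(Z)$, the paper (in Lemma~\ref{l5}) uses Theorem~\ref{th:H} twice: once to bound $|u|,|v|$ by $C\sup|w|$ near the point (giving the $a,r$), and once, via l'H\^opital's rule $v_{(k,0,\dots,0)}/w_{(k,0,\dots,0)}=\frac{v}{w}(y)$, to bound $|v_{(k,0,\dots,0)}|$ from below by $C^{-1}|w_{(k,0,\dots,0)}|$. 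This lower bound on the leading denominator coefficient — uniform in $u,v$ — is what you are missing; without it there is no uniform radius for the holomorphic extension of $f$, and your appeal to "the local division theorem provides some $r_p>0$ depending continuously on $p$" has no justification, since nothing in the local division theorem controls $r_p$ uniformly in $u$ and $v$. Once Lemma~\ref{l5} gives, at each $y$, constants $A_y,R_y$ depending only on $y,w$, the compactness step you describe does work: real analyticity of $f$ spreads the pointwise derivative estimate to a neighborhood $V_y$, and a finite subcover of $K$ produces the final $A,R$. So your compactness outline at the end is fine; the gap is the absence of a quantitative division estimate and the uniform lower bound on the leading coefficient of $v$, which together replace the removable-singularity argument you gestured at.
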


The extension of Magoubi's estimate of $|\nabla \log |f||$ to dimension three immediately follows from Theorem \ref{th:H} and Theorem \ref{th:full}.


\subsection{Outline of the proof}
In order to study the ratios of harmonic functions we want to understand the local behavior of a harmonic function near its zero point. In higher dimensions the structure of the zero set of a harmonic function could be very complicated. However the following key observations still hold: (i) locally the zero set resembles that of a harmonic polynomial (at least in some sense, see Lemma \ref{l3} and also Counterexample \ref{e:NH}), (ii) if $P$ and $Q$ are homogeneous harmonic polynomials satisfying $Z(Q)\subset Z(P)$ then $Q|P$ as a polynomial. Our first step is division of harmonic polynomials with common set of zeros. The main tool is the Brelot-Choquet theorem which says that a non-constant factor of a harmonic polynomial changes sign, see \cite{BC}. The lemma on division that we need follows form the results of B.H.~Murdoch \cite{Mur};  some facts on divisibility similar to what we use can be also found in  \cite[Section 5]{MOV}, where division of harmonic polynomials is applied to estimates of the maximal singular operators.

The result on division of harmonic polynomials allows us to write the ratio of two harmonic functions with the same zero set $Z$ as a formal power series centered at any point of $Z$. Further, using some intrinsic estimates, we show that  this series has positive radius of convergence, hence the ratio is a real analytic function. Then we establish the maximum (and minimum) principle for the ratios of harmonic functions. 

Next step is to prove Theorem \ref{th:H}. The main idea is to combine  the Maximum Principle and the Boundary Harnack Principle.  In dimension three we prove the following structure lemma: there is a countable set $D$ with locally finitely many accumulation points such that for any neighborhood $V$ of $D$ near each point of  $Z\setminus V$ the boundaries of all nodal domains are graphs of Lipschitz functions. When the structure lemma is obtained, the rest of the argument is relatively simple. We choose a ball $B_r$  that contains $K$ and such that $S_r=\partial B_r$ does not contain any points of $D$ or any accumulation point of $D$. Applying the boundary Harnack principle for parts of nodal domains near $S_r$ we conclude that $\max_{S_r} |f|\le C\min_{S_r}|f|$. Then the maximum and minimum principles for $f$  give the required Harnack inequality.

\subsection{Acknowledgments} We are grateful  to Dan Mangoubi for explaining his work \cite{M} to one of us. The present work was mostly carried out when the first  author visited the Department of Mathematical Sciences of the Norwegian University of Science and Technology and the second author visited Chebyshev Laboratory at St.Petersburg State University. It is a pleasure to thank both institutions for their support and great working conditions. The first author was supported by the Chebyshev Laboratory (Department of Mathematics
and Mechanics, St. Petersburg State University) under the RF Government grant
11.G34.31.0026, and by JSC "Gazprom Neft"; the second author was supported by Project 213638 of the Research Council of Norway.


\section{Local division of power series of harmonic functions}  
In this section we discuss the local division of harmonic functions with the common set of zeros $Z$ in $\Omega\subset\R^n$. We show that for any $a\in \Omega$ there is a power series $f_a$ such that $u(x)=v(x)f_a(x)$ as formal power series centered at $a$. We take $a\in Z$ and assume that $a=0$ to simplify the notation. 

\if false

\fi

\subsection{Division of Harmonic Polynomials and Formal Power Series}
Let $P$ and $Q$ be polynomials in $\mathbb{R}[x_1, x_2, \dots, x_n]$. We are interested in conditions on $P$ and $Q$  ensuring the divisibility of  $P$ by $Q$. If $P$ is divisible by $Q$, then surely $Z(Q) \subset Z(P)$. The converse statement is false in general but it appears to be true if  $Q$ is a homogeneous harmonic polynomial. 

\begin{lemma}[Division Lemma] \label{dl2}
 Suppose $Q$ is a homogeneous harmonic polynomial and $P$ is a polynomial such that $Z(Q) \subset Z(P)$. Then $P=QR $ for some $R \in \mathbb{R}[x_1, x_2, \dots, x_n]$ 

 \end{lemma}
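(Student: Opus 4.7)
The plan is to reduce the lemma to two separate statements about the irreducible factorization of $Q$ over $\mathbb{R}[x_1,\ldots,x_n]$. Writing $Q=c\prod_{i=1}^{k} Q_i^{a_i}$ with the $Q_i$ pairwise distinct irreducibles, the claim $Q\mid P$ will follow from (a) $Q$ is squarefree, i.e.\ every $a_i=1$, and (b) each irreducible factor $Q_i$ individually divides $P$, since pairwise coprimality of the $Q_i$ then upgrades (b) to $\prod_i Q_i\mid P$, which together with (a) yields the desired conclusion.

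For (a), I argue by contradiction: suppose $Q=Q_i^{2}S$ with $\gcd(Q_i,S)=1$. A direct computation of $\Delta(Q_i^{2}S)$ gives
\[
\Delta Q \;=\; 2|\nabla Q_i|^{2}S + Q_i\bigl(2 S\,\Delta Q_i + 4\,\nabla Q_i\!\cdot\!\nabla S + Q_i\,\Delta S\bigr),
\]
so $\Delta Q=0$ together with coprimality forces $Q_i\mid|\nabla Q_i|^{2}$, i.e.\ $\nabla Q_i$ vanishes on all of $Z(Q_i)$. On the other hand, since $Q_i$ is a nonconstant factor of the harmonic polynomial $Q$, the Brelot--Choquet theorem guarantees that $Q_i$ changes sign, so $Z(Q_i)$ is a real hypersurface whose smooth locus is non-empty and dense; at any smooth point one has $\nabla Q_i\neq 0$, a contradiction. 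For (b), I first note that the sign change of $Q_i$ also forces it to be absolutely irreducible (otherwise $Q_i=c\,\alpha\bar\alpha$ with $\alpha$ a non-real complex irreducible, giving $Q_i=c|\alpha|^{2}$ of constant sign on $\mathbb{R}^n$). At a smooth real point $x_0$ of $Z(Q_i)$ I use $Q_i$ itself as one of the analytic coordinates; then $Z(Q)\subset Z(P)$ forces $P$ to vanish on the coordinate hypersurface $\{Q_i=0\}$, so $P/Q_i$ is analytic near $x_0$. Consequently $P/Q_i$ is a rational function whose only possible pole is along the irreducible hypersurface $\{Q_i=0\}$, and that pole is excluded by the local computation, so $Q_i\mid P$ in $\mathbb{R}[x_1,\ldots,x_n]$.

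The main obstacle is step (a): without squarefreeness the conclusion can fail, as illustrated by $Q=x^{2}$, $P=x$, so the full weight of the harmonicity assumption $\Delta Q=0$ must be used precisely here. Brelot--Choquet also plays an essential role, excluding irreducible real factors with thin real zero sets (such as $x^{2}+y^{2}$) whose repetition in $Q$ would otherwise remain invisible to $Z(Q)$. Once (a) is established, (b) reduces to a local fact about smooth points of a real hypersurface, and the final combination step is purely formal.
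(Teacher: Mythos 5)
Your proposal is in the right spirit and shares the paper's two key ingredients — Brelot--Choquet (a non-constant factor of a harmonic polynomial changes sign) and a real Nullstellensatz-type division lemma for sign-changing irreducible polynomials — but the organization differs, and step (a) contains an unnecessary detour plus a couple of genuine soft spots.

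For step (a), the Laplacian computation is avoidable: you already invoke Brelot--Choquet, and applying it directly to $Q_i^{2}$ (which is $\geq 0$, hence cannot change sign) immediately rules out $Q_i^{2}\mid Q$, giving squarefreeness with no calculation at all. This is in effect how the paper proceeds: its Proposition on divisors changing sign forces squarefreeness automatically, without ever touching $\Delta Q$. Beyond being a detour, your computation as written only treats the multiplicity-two case: if the $Q_i$-adic valuation of $Q$ is $a_i\geq 3$, then $Q=Q_i^{2}S$ with $\gcd(Q_i,S)=1$ is false; you should write $Q=Q_i^{a_i}S$ and repeat the calculation to again obtain $Q_i\mid|\nabla Q_i|^{2}$. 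Finally, the closing claim of (a) — that a sign-changing irreducible $Q_i$ has a smooth real zero, so $\nabla Q_i$ cannot vanish on all of $Z(Q_i)$ — is asserted rather than proved. It is true, but its proof is essentially of the same depth as the division lemma you sketch in (b) (one must pass from ``$Z(Q_i)$ has positive $H^{n-1}$-measure'' to ``the complex singular locus cannot swallow it''), so quoting it as obvious hides a real step.

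Step (b) is essentially a re-derivation of the Mateu--Orobitg--Verdera Division Lemma, which the paper simply cites as a black box. Your sketch is morally correct — absolute irreducibility from the sign change, local division at a smooth real point, then upgrading the absence of a pole at one point to a global polynomial identity — but the last upgrade needs a little more care: you should pick $x_0$ outside the zero set of the numerator of $P/Q_i$ in lowest terms, and invoke constancy of pole order along an irreducible complex hypersurface. By contrast, the paper proves two clean propositions (a non-constant divisor of a homogeneous harmonic polynomial changes sign and has $H^{n-1}$-positive zero set, proved via spherical orthogonality; and a Hausdorff-measure refinement of divisibility) and then runs an induction on $\deg Q$, peeling off one irreducible factor at a time; squarefreeness never needs to be isolated as a separate step because the sign-change property excludes $Q_1^{2}\mid Q$ inside the induction. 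Your (a)/(b)/combine decomposition is a valid alternative organization once the gaps above are filled, but it is neither shorter nor more elementary than the paper's route.
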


Lemma \ref{dl2} follows from Theorem 2 and Lemma 4 in \cite{Mur}.  We outline a proof in the last section for reader's convenience.

We extend the division to a general case and divide  a real analytic function by a harmonic function. 
For the rest of this subsection we suppose that a real analytic function $u$ and a harmonic function $v$ are given.
Consider the Taylor expansions of  $u$ and $v$ at the origin 
\[u =\sum\limits_{i=k}^{\infty} u_i,\quad v =\sum\limits_{i=l}^{\infty} v_i,\] where $u_i$ and $v_i$ denote homogeneous polynomials of degree $i$ and $u_k$ and $v_l$ are non-zero polynomials.
\begin{lemma} \label{l3}
 If $Z(v) \subset Z(u)$, then $Z(v_l) \subset Z(u_k)$.
\end{lemma}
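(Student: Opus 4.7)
The plan is a proof by contradiction using a rescaling argument together with the intermediate value theorem. Suppose, for the sake of contradiction, that $Z(v_l)\not\subset Z(u_k)$, so there is a point $x_0\in\R^n$ with $v_l(x_0)=0$ but $u_k(x_0)\neq 0$; say $u_k(x_0)>0$. We may assume $l\ge 1$, since if $l=0$ then $v_l$ is a nonzero constant, $Z(v_l)=\emptyset$, and the conclusion is vacuous.

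The central device is the rescaling
\[
u_t(x):=t^{-k}u(tx),\qquad v_t(x):=t^{-l}v(tx),\qquad t>0.
\]
Since $u$ and $v$ are real analytic in a neighborhood of the origin, Taylor's theorem gives $u_t\to u_k$ and $v_t\to v_l$ uniformly on every fixed compact subset of $\R^n$ as $t\to 0^+$. Moreover $v_t(z)=0$ if and only if $v(tz)=0$, so zeros of $v$ approaching the origin correspond under scaling to zeros of the limit $v_l$.

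Next I would invoke the minimum principle to force a sign change of $v_l$ near $x_0$: because $v_l$ is a non-constant harmonic polynomial with an interior zero at $x_0$, it cannot stay of one sign in any neighborhood of $x_0$, so we can pick $y_+,y_-$ arbitrarily close to $x_0$ with $v_l(y_+)>0>v_l(y_-)$. Shrinking if necessary, choose a ball $B\ni x_0,y_+,y_-$ on which $u_k$ is bounded below by a positive constant. By uniform convergence, for all sufficiently small $t>0$ we have $v_t(y_+)>0>v_t(y_-)$ and $u_t>0$ on $B$. Applying the intermediate value theorem to $v_t$ along the straight segment from $y_-$ to $y_+$ yields a point $z\in B$ with $v_t(z)=0$ and $u_t(z)>0$. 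Rescaling back, $v(tz)=0$ while $u(tz)=t^k u_t(z)>0$, contradicting $Z(v)\subset Z(u)$.

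The principal difficulty is to promote a zero of the leading homogeneous term $v_l$ at $x_0$ to an actual zero of $v$ arbitrarily close to the ray through $x_0$; once the rescaling is set up, uniform convergence plus the forced sign change of the harmonic polynomial $v_l$ delivers this via IVT. Note that no division result and no assumption on dimension is needed at this step, and the argument uses only the harmonicity of $v_l$ (through the minimum principle), not that of $u_k$.
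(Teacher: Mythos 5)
Your proof is correct and takes essentially the same approach as the paper: both arguments assume a contrary point where $v_l$ vanishes but $u_k$ does not, use the minimum principle to find nearby points where the harmonic polynomial $v_l$ takes opposite signs, then scale towards the origin so that the first Taylor terms dominate and the intermediate value theorem produces a zero of $v$ at which $u\neq 0$. The only cosmetic difference is that you package the blow-up via the rescaled functions $u_t,v_t$ and a small ball around $x_0$, whereas the paper works directly with $u,v$ on a small cone $\Gamma$ around $y$ where $u_k(x)>\varepsilon|x|^k$; these are two presentations of the same estimate.
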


\begin{proof} Assume the contrary: let $y$ be a point such that  $u_k(y) \neq 0$ and $v_l(y)=0$.  We may assume $u_k(y)>0$, so there is an open convex cone $\Gamma$ containing $y$ and $\varepsilon>0$ such that  $u_k(x)>\varepsilon|x|^k$ for any $x\in \Gamma$.  Since $u(x)=u_k(x)+o(|x|^k)$ near the origin, there exists $r>0$ such that for any $ x \in \Gamma $ with $|x|<r$ the inequality $u(x)>0$ holds. 

Clearly, $v_l$ is a harmonic polynomial. By the maximum and minimum principle  there exist $y_+$, $y_-$ arbitrarily close to $y$ with $v_l(y_+)>0$ and $v_l(y_-)<0$,  take $y_+$, $y_-$ within $\Gamma$. Consider $ty_+$ and $ty_-$, where $t$ is a positive real number . If $t$ is small enough, then $v(ty_+)>0$,$v(ty_-)<0$, $|ty_+|<r$ and $|ty_-|<r$. Choose $t$ so that the previous four inequalities hold, then there exists $x$ in the segment connecting $ty_+$ and $ty_-$ such that $v(x)=0$. It is clear that $x \in \Gamma$ and 
$|x|<r$, therefore $u(x)>0$. Thus we obtained a contradiction with $Z(v) \subset Z(u)$.
 \end{proof}

Now, we are in a position to divide an analytic function by a harmonic one as Taylor series.
\begin{lemma} \label{l4}
 If $a\in Z(v) \subset Z(u)$, then there exists a formal power series $f$ such that $u=vf$ as power series centered at $a$.  
\end{lemma}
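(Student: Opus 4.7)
The plan is to construct the homogeneous components of $f$ one degree at a time by induction, using Lemmas~\ref{l3} and~\ref{dl2} in tandem at every step. Write the Taylor expansions at the origin as $u = \sum_{i \geq k} u_i$ and $v = \sum_{i \geq l} v_i$ with $u_k, v_l \neq 0$. For the base case, I would apply Lemma~\ref{l3} directly to $u$ and $v$ to obtain $Z(v_l) \subset Z(u_k)$. Since $v_l$ is a homogeneous harmonic polynomial (the lowest-order homogeneous part of a harmonic function is itself harmonic), the Division Lemma~\ref{dl2} then produces a homogeneous polynomial $f_{k-l}$ of degree $k-l$ with $u_k = v_l f_{k-l}$; in particular this forces $k \geq l$.

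For the inductive step, suppose homogeneous polynomials $f_{k-l}, \dots, f_{k-l+N-1}$ have been constructed so that the partial sum $F_N := \sum_{m=0}^{N-1} f_{k-l+m}$ satisfies $u - vF_N = O(|x|^{k+N})$ near the origin. The crucial move is to regard the remainder $h_N := u - vF_N$ as a new real analytic function and to observe that it vanishes identically on $Z(v)$ near $0$, since $u$ does by hypothesis and $vF_N$ does trivially. Thus the pair $(h_N, v)$ meets the hypothesis of Lemma~\ref{l3}, whose conclusion yields $Z(v_l) \subset Z(P_N)$, where $P_N$ denotes the homogeneous component of $h_N$ of degree $k+N$. (If $P_N = 0$, just set $f_{k-l+N} := 0$ and pass on.) A second application of Lemma~\ref{dl2} then supplies a homogeneous polynomial $f_{k-l+N}$ of degree $k-l+N$ with $P_N = v_l f_{k-l+N}$, and by construction $u - vF_{N+1} = O(|x|^{k+N+1})$, completing the induction.

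Collecting the pieces, the formal power series $f := \sum_{m \geq 0} f_{k-l+m}$ satisfies $u = vf$ coefficient by coefficient, which is the desired identity. The step I would flag as the conceptual heart of the argument is the iterated invocation of Lemma~\ref{l3}: the inclusion $Z(v) \subset Z(u)$ must be used not only at the very start but in an entirely analogous way on each remainder $h_N$, and it is precisely this propagation of the common-zero information down through all Taylor levels that licenses the Division Lemma at every degree. Convergence of the resulting series is not claimed in this statement and is presumably taken up separately in the next subsection.
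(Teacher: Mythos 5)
Your proof is correct and follows essentially the same approach as the paper's: peel off one homogeneous component of $f$ per step by first applying Lemma~\ref{l3} to the remainder $u - vF_N$ (which vanishes on $Z(v)$ because both $u$ and $vF_N$ do) and then dividing by $v_l$ via Lemma~\ref{dl2}. You are, if anything, slightly more explicit than the paper about the trivial case $P_N = 0$ and about why the remainder still satisfies the hypothesis of Lemma~\ref{l3}.
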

\begin{proof} We may assume that $a=0$.
 By Lemma \ref{l3}  $Z(v) \subset Z(u) $ implies $ Z(v_l) \subset Z(u_k)$ and by Lemma \ref{dl2}  $u_k$ is divisible by $v_l$. The ratio of $u_k$ and $v_l$  is a homogeneous polynomial of degree $k-l$ which we denote by $f_{k-l}$ and put $\tilde u:=u - v f_{k-l}$. Note that $Z(v) \subset Z(\tilde u)$ and that the degree of the first non-zero polynomial in the Taylor expansion of $\tilde u$ is at least $k+1$. Using similar division step for $\tilde u$ and $v$ (instead of $u$ and $v$), we can find a polynomial $f_{k-l+1}$ such that $\tilde u_{k+1}=f_{k-l+1}v_l$. Further we  put $\tilde{\tilde u}:= \tilde u - v f_{k-l+1}$, then
the degree of the first polynomial in the expansion of $\tilde{\tilde u}$ is at least $k+2$. Applying this division step infinitely many times we obtain a formal equality of power series $u=vf$, where $f= \sum_{j=0}^\infty f_{k-l+j}$.
\end{proof}


\subsection{Estimates of Formal Power Series}
  In the previous subsection we obtained the equality of power series $u=vf$. Next we obtain estimates on the coefficients of $f$ and show that the series converges to  a real analytic function in some neighborhood of the origin. 

We use the usual multi-index notation, $\alpha=(\alpha_1, \alpha_2,...,\alpha_n), \alpha_j\in \Z_+$, $x^\alpha=x_1^{\alpha_1}x_2^{\alpha_2}...\,x_n^{\alpha_n}$; the set of multi-indices is equipped with the partial order  $\alpha \leq \beta$ if $\alpha_i \leq \beta_i$ for each $i$: $1\leq i \leq n$.
	

\begin{lemma} \label{estimate}  Let  $u=\sum\limits_{\alpha} u_\alpha x^{\alpha} $, $v=\sum\limits_{\alpha} v_\alpha x^{\alpha} $, 
$f=\sum\limits_{\alpha} f_\alpha x^{\alpha} $ be formal power series centered at the origin and such that $u=vf$. Suppose that $|v_\alpha| \leq a r^{|\alpha|}$, $|u_\alpha|\le ar^\alpha$ for each $\alpha$ and some positive $a$ and $r$. Assume also $|v_{(k,0,\cdots,0)}|=c>0$ and $v=\sum\limits_{|\alpha| \geq k} v_\alpha $. Then there exist $A$ and $R=(R_1, R_2, \cdots, R_n)$ depending only on $a, c, r, k$, and $n$  such that 
\begin{equation}\label{coef}|f_{\beta}|\leq A R^{\beta} \quad (R^{\beta}:=  R_1^{\beta_1}R_2^{\beta_2}\cdots R_n^{\beta_n})\end{equation} 
for any multi-index $\beta$.  Hence $f$ represents a real analytic function near the origin.
\end{lemma}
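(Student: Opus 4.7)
The plan is to invert the identity $u=vf$ into a recursion for the coefficients $f_\beta$ using the single distinguished nonzero coefficient $c=|v_e|$, where $e:=(k,0,\dots,0)$, and then to bound the recursion by induction with an anisotropic ansatz. Comparing coefficients of $x^{\beta+e}$ on both sides of $u=vf$ and isolating the term $\gamma=e$ gives
\begin{equation*}
v_e\, f_\beta \;=\; u_{\beta+e} \;-\; \sum_{\substack{\gamma\ne e,\ |\gamma|\ge k\\ \gamma\le\beta+e}} v_\gamma\, f_{\beta-\gamma+e}.
\end{equation*}
Setting $\beta':=\beta-\gamma+e$, one has $|\beta'|=|\beta|+k-|\gamma|\le|\beta|$, with equality only when $|\gamma|=k$; in that case $\gamma\ne e$ forces $\gamma_1<k$, hence $\beta'_1>\beta_1$. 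The lexicographic order on the pair $(|\beta|,-\beta_1)$ is therefore well-founded on the indices appearing in the recursion, so $f_\beta$ is uniquely determined by its predecessors, with $\beta=0$ as the unique base case.

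I would then prove $|f_\beta|\le A\,R^\beta$ by induction in this order, with the anisotropic choice $R_1=\rho$ and $R_2=\cdots=R_n=T$. Substituting the inductive hypothesis and the bounds $|u_\alpha|,|v_\alpha|\le ar^{|\alpha|}$ into the recursion and dividing through by $AR^\beta$, the inductive step reduces to
\begin{equation*}
c \;\ge\; \frac{a r^{|\beta|+k}}{A R^\beta} \;+\; a\rho^k\,\Sigma,
\qquad
\Sigma := \sum_{\substack{\gamma\ne e\\|\gamma|\ge k}} (r/\rho)^{\gamma_1}(r/T)^{|\gamma|-\gamma_1}.
\end{equation*}
Provided $\rho,T\ge r$, the first summand is at most $ar^k/A$, which is absorbed by taking $A$ large. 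To control $a\rho^k\Sigma$, I would split the sum at $\gamma_2=\cdots=\gamma_n=0$: those terms force $\gamma_1\ge k+1$ and contribute at most $r^{k+1}/(\rho-r)$ after multiplication by $\rho^k$; the remaining terms, with some $\gamma_j>0$ for $j\ge 2$, have a combinatorial count polynomial in $|\gamma|-\gamma_1$ and are controlled by geometric sums in $r/T$ and $\rho/T$. Choosing $\rho$ large enough that $ar^{k+1}/(\rho-r)\le c/4$, and then $T\gg\rho$ large enough that the remaining terms also contribute at most $c/4$, gives $a\rho^k\Sigma\le c/2$. Any $A\ge 2ar^k/c$ then closes the induction, and the base case $\beta=0$ is immediate: the sum is empty and $|f_0|=|u_e|/c\le ar^k/c\le A$.

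The main technical obstacle is the calibration of $\rho$ and $T$. An isotropic ansatz $\rho=T$ would leave the factor $\rho^k$ competing directly against the convergence rate of $\Sigma$; only the anisotropy $T\gg\rho$ separates these two effects into independent choices. This anisotropy is ultimately forced by the fact that $v$'s distinguished coefficient $c$ sits in a single ($x_1$) direction, which makes division cheap in that direction but not in the others.
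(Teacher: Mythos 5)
Your proposal is correct and follows essentially the same route as the paper: isolate the distinguished coefficient $v_{(k,0,\dots,0)}$ to get the recursion $c\,f_\beta = u_{\beta+\tilde k}-\sum v_\gamma f_{\beta+\tilde k-\gamma}$, induct over a well-founded order, and close the induction with the anisotropic choice $r\ll R_1\ll R_2=\cdots=R_n$ so that the convolution tail is a small geometric sum absorbed by $c$, with $A$ large handling the $u$-term. The only differences are cosmetic — you induct on $(|\beta|,-\beta_1)$ rather than the paper's reverse-lexicographic well-order, and you split the geometric sum into pure-$x_1$ terms versus the rest instead of bounding it by a product of geometric progressions minus one — and both verifications are sound.
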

 Denote $(k,0,\dots, 0)$ by $\tilde k$.
  By the equality of formal power series $u=vf$ we have
\begin{equation} \label{ineq1}
u_{\beta + \tilde k}= \sum\limits_{\gamma\le \beta+\tilde k, |\gamma|\le|\beta|} f_\gamma v_{\beta+\tilde k - \gamma}  \hbox{ \quad for any multi-index $\beta$}.
\end{equation}

We need an auxiliary proposition which will be used to estimate $|f_\beta|$.
\begin{prop} \label{estimate prop}
 For any $a_0$, $r > 0$  there exist $A=A(a_0,r)$ and $R=(R_1, R_2, \dots , R_n)=R(a_0,r)$ such that for each {multi-index~$\beta$}
  \begin{equation} \label{ineq 2}
a_0r^{|\beta + \tilde k|} + a_0A \sum\limits_{\gamma\le \beta+\tilde k, |\gamma|\le |\beta|,\gamma\neq \beta} R^{\gamma}r^{|\beta+\tilde k -\gamma|} \leq A R^{\beta}.
 \end{equation}

\end{prop}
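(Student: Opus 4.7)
My plan is to divide (\ref{ineq 2}) through by $AR^\beta$ and arrange both resulting terms on the left to be at most $\tfrac12$. Assuming $R_j \ge r$ for each $j$, one has $R^\beta \ge r^{|\beta|}$, so the first term is at most $a_0 r^k/A$, and requiring $A \ge 2a_0 r^k$ handles it.

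For the sum term, substitute $\delta=\beta+\tilde k-\gamma$: the conditions $\gamma\le\beta+\tilde k$, $|\gamma|\le|\beta|$, $\gamma\ne\beta$ become $\delta\ge 0$, $|\delta|\ge k$, $\delta\ne\tilde k$ (plus $\delta\le\beta+\tilde k$, which I drop as a harmless overcount). A direct calculation using $\gamma-\beta=\tilde k-\delta$ gives
\[
\frac{R^\gamma\, r^{|\beta+\tilde k-\gamma|}}{R^\beta}=R_1^k\prod_{j=1}^n (r/R_j)^{\delta_j},
\]
so it remains to force $R_1^k\sum_\delta\prod_j(r/R_j)^{\delta_j}\le 1/(2a_0)$, where the sum ranges over $\delta\ge 0$ with $|\delta|\ge k$ and $\delta\ne\tilde k$.

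I would work at two scales: take $R_2=\cdots=R_n=M$ with $M$ much larger than $R_1>r$, and split according to whether $\delta':=(\delta_2,\ldots,\delta_n)$ vanishes. When $\delta'=0$, the constraints force $\delta_1\ge k+1$, and the resulting geometric series in $r/R_1$ evaluates to $r^{k+1}/(R_1-r)$; this is made $\le 1/(4a_0)$ by taking $R_1$ large. When $\delta'\ne 0$, the inner sum $\sum_{\delta_1}(r/R_1)^{\delta_1}$ contributes a factor $R_1/(R_1-r)$ (with an extra $(r/R_1)^{k-|\delta'|}$ when $|\delta'|<k$), and grouping by $m:=|\delta'|\ge 1$ yields contributions of order $r^k(R_1/M)^{\min(m,k)}$ once the outer $R_1^k$ is absorbed; hence the total in this case is $O(R_1/M)$ as $M\to\infty$ with $R_1$ fixed, and is forced $\le 1/(4a_0)$ by choosing $M$ sufficiently large depending on $R_1,k,n,r,a_0$.

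The main obstacle is precisely that a single common value $R_j=R$ does not work: controlling the $\delta'=0$ terms forces $R_1$ large, but this then makes the $\delta'\ne 0$ terms with $m=1$ (which carry a factor $R_1/R_j$) uncontrollable unless the other $R_j$ are taken still larger. Separating the two regimes and choosing the parameters in sequence---first $A$, then $R_1$, then $M$---resolves this tension and produces constants depending only on $a_0,r,k,n$, as required.
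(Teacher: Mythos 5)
Your proof is correct and follows essentially the same route as the paper's: divide by $AR^\beta$, absorb the first term by taking $A$ large (under the normalization $R_j\ge r$), and bound the residual sum by an over-counted multi-dimensional geometric series that is forced small by the two-scale choice $r \ll R_1 \ll R_2 = \cdots = R_n$. The difference is purely in bookkeeping. The paper sets $\delta=|\beta|-|\gamma|$ (a scalar) and $\delta_j=\beta_j-\gamma_j$ for $j\ge 2$; since $r^{\beta-\gamma}$ depends only on $|\beta|-|\gamma|$, these $n$ variables range freely over $\Z_+^n\setminus\{0\}$ after the overcount, and the sum decouples into a product of one-dimensional geometric series with closed-form value $\prod_{j\ge 2}(1-R_1/R_j)^{-1}\,(1-r/R_1)^{-1}-1$, which tends to $0$ as the ratios do — no case analysis needed. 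Your substitution $\delta=\beta+\tilde k-\gamma$ is equally natural, but the translated constraint $|\delta|\ge k$ couples the components, which is exactly why you are forced into the $\delta'=0$ versus $\delta'\neq 0$ split (and why you then have to track the binomial factors $\binom{m+n-2}{n-2}$ implicit in your sum over $m=|\delta'|$, which you should make explicit but which are harmless). Your closing diagnosis of why a single common value $R_j=R$ cannot work is exactly right and matches the paper's remark $r\ll R_1\ll R_2=\cdots=R_n$; the paper's change of variables shows this tension is real but can be resolved without a case split.
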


 We postpone the proof of the Proposition. First, we assume it is true and show that then Lemma \ref{estimate} holds with  $A=A(ac^{-1},r)$ and $R=R(ac^{-1}, r)$. We prove (\ref{coef}) by induction with respect to some lexicographic order on multi-indices. 

 Consider the set of multi-indices $\mathbb{A}:=\{\alpha=(\alpha_1, \dots, \alpha_n): \alpha_i \in \Z_+  \}$ with the order $\prec$ defined by
$$ \gamma \prec \beta \mathop{\iff}\limits^{\hbox{def}} \left [ 
\begin{array}{c c c c c}
\gamma_n< \beta_n \\
\gamma_n= \beta_n, & \gamma_{n-1}< \beta_{n-1} \\
\vdots \\
\gamma_n= \beta_n, & \gamma_{n-1}= \beta_{n-1} & \cdots  & \gamma_2= \beta_2, & \gamma_1< \beta_1.
\end{array}
\right.$$
Then $(\mathbb{A}, \prec)$ is a well-ordered set.
 
\begin{proof}[Proof of Lemma \ref{estimate}]
Denote by $S$ the set of multi-indices $\alpha$ with $|f_{\alpha}|> A R^{\alpha}$. Our goal is to show that $S$ is an empty set. Suppose $S$ is not empty, then $S$ has the least element in the ordering $\prec$, denote it by $\beta$. Let us write $\bigstar$ instead of the following summation condition $
\gamma\le \beta+\tilde k, |\gamma|\le|\beta|, \gamma\neq \beta$; clearly this condition implies $\gamma\prec\beta$. Then (\ref{ineq1}) can be written as
\begin{equation*}
v_{\tilde k} f_\beta = u_{\beta + \tilde k} - \sum\limits_{ \bigstar 
} f_\gamma v_{\beta+\tilde k - \gamma} 
\end{equation*}
 Note that $|f_\gamma|\leq  A R^\gamma$ for any $\gamma \prec \beta$. Keeping in mind that $|v_{\tilde k}|=c>0$ we obtain
\begin{multline*}
|f_\beta| \leq c^{-1} |u_{\beta + \tilde k}| + c^{-1}\sum\limits_{ \bigstar 
} |f_\gamma v_{\beta+\tilde k - \gamma}| \leq\\ c^{-1}a r^{|\beta + \tilde k|} + c^{-1}\sum\limits_{\bigstar}A R^{\gamma} a r^{|\beta+\tilde k -\gamma|} \mathop{\leq}\limits^{\hbox{by (\ref{ineq 2})}} A R^{\beta}.
\end{multline*}
 Therefore $|f_\beta| \leq  A R^{\beta} $ and $\beta \notin S$. Thus $S$ is an empty set and the proof is completed.
\end{proof}

\begin{proof}[Proof of Proposition \ref{estimate prop}] We write $r^\alpha$ for $r^{\sum \alpha_i}$, where $\alpha=(\alpha_1,...,\alpha_n)$ with $a_i\in\Z$.
  Dividing the both sides of (\ref{ineq 2}) by $AR^{\beta}$, we reduce it to the following inequality
\begin{equation} \label{ineq 3}
\frac{a_0r^{k}}{A} \frac{r^{\beta}}{R^{\beta}} + a_0 r^{k} \sum\limits_{\bigstar} \frac{R^{\gamma} r^{\beta - \gamma}}{R^{\beta}} \leq 1.
 \end{equation}
  The first summand can be made less then $1/2$ for all $\beta$ if we choose $A$ and $R=(R_1,...,R_n)$ to be sufficiently large so that 
	\begin{equation*}
\frac{a_0r^{k}}{A}\leq 1/2 \hbox{ and } R_i \geq r  \hbox{ for all } i \in [1,n]. 
 \end{equation*}
 Therefore it  suffices to achieve  \[\sum\limits_{\bigstar} \frac{R^{\gamma} r^{\beta - \gamma}}{R^{\beta}}\leq \frac{1}{2a_0 r^{k}}\]  to make  the inequality  (\ref{ineq 3}) true. By $\bigstar$ we have $\beta_i \geq \gamma_i$ for any $i\in [2,n]$  and $|\beta|\ge |\gamma|$; denote $\beta_i - \gamma_i$ by $\delta_i$ and $|\beta|-|\gamma|$  by  $\delta$.  It's easy to see that  
\begin{equation}\label{sums} \sum\limits_{\bigstar} \frac{R^{\gamma} r^{\beta - \gamma}}{R^{\beta}}\leq \sum\limits_{\bullet}
\left(\frac{R_1}{R_2} \right)^{\delta_2} \left(\frac{R_1}{R_3} \right)^{\delta_3} \dots \left(\frac{R_1}{R_n} \right)^{\delta_n} \left(\frac{r}{R_1}\right)^{\delta},\end{equation} 
where $\bullet$  is the following condition: 
\begin{equation*}
\delta,  \delta_2, \delta_3, \dots, \delta_n \in \Z_+, \quad \delta + \sum\limits_{i\geq 2} \delta_i >0. \end{equation*}
  Note that the right hand side of (\ref{sums})  is the product of geometric progressions without the first term $1$. Therefore \begin{multline*}\sum\limits_{\bullet}
\left(\frac{R_1}{R_2} \right)^{\delta_2} \left(\frac{R_1}{R_3} \right)^{\delta_3} \dots \left(\frac{R_1}{R_n} \right)^{\delta_n} \left(\frac{r}{R_1}\right)^{\delta}=\\
\frac{1}{1-\frac{R_1}{R_2}} \frac{1}{1-\frac{R_1}{R_3}}\dots \frac{1}{1-\frac{R_1}{R_n}} \frac{1}{1-\frac{r}{R_1}} - 1.\end{multline*}
And the last expression can be made arbitrarily small by a proper choice of $R$ (we can take $r << R_1, R_1 << R_2=R_3=..=R_n$).
\end{proof}


\subsection{Division by a harmonic function}
  Lemma \ref{estimate} together with Lemma \ref{l4} gives us the following theorem.
\begin{theorem} \label{t1}
Suppose $u$ is a real-analytic function and $v$ is a harmonic function, both functions are defined in some domain $\Omega\subset \mathbb{R}^n$, $n\geq 2$. If $Z(v) \subset Z(u)$, then there exist a real-analytic function $f$ in $\Omega$ such that $u=vf$. 
\end{theorem}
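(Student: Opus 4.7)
My plan is to construct $f$ point by point on $\Omega$ and glue. We may assume $v\not\equiv 0$ on each connected component of $\Omega$; otherwise $Z(v)=\Omega\subset Z(u)$ forces $u\equiv 0$ there and we can take $f\equiv 0$ on that component. With this assumption, $Z(v)$ is nowhere dense. On $\Omega\setminus Z(v)$, simply set $f:=u/v$, which is real-analytic by non-vanishing of the denominator. The work is to produce an analytic extension of $f$ across each $a\in Z(v)$.

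After translating, assume $a=0$. By Lemma \ref{l4} there is a formal power series $f=\sum f_\alpha x^\alpha$ with $u=vf$ as formal series at the origin. I would like to apply Lemma \ref{estimate} to conclude that this series has positive radius of convergence. The coefficient bounds $|u_\alpha|,|v_\alpha|\le ar^{|\alpha|}$ follow from real-analyticity of $u$ and $v$ via Cauchy-type estimates on a small neighborhood. The only subtle hypothesis is the normalization $|v_{(k,0,\dots,0)}|=c>0$, where $k$ is the order of vanishing of $v$ at the origin.

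To arrange this normalization I would perform an orthogonal change of coordinates. The leading homogeneous term $v_k$ is a non-zero polynomial, so there exists a unit vector $e$ with $v_k(e)\neq 0$. An orthogonal transformation sending $e_1\mapsto e$ preserves harmonicity of $v$ and real-analyticity of $u$, and after this change of coordinates the coefficient of $x_1^k$ in $v_k$ equals $v_k(e)\neq 0$. Lemma \ref{estimate} then yields $|f_\alpha|\le A R^\alpha$, so the series defines a real-analytic function on a polydisc around the origin; pulling back through the rotation, $f$ is real-analytic on some neighborhood $U_a$ of $a$.

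Finally, the local definitions are consistent: on $U_a\setminus Z(v)$ both the locally convergent series and $u/v$ satisfy $vf=u$ with $v$ invertible, hence they agree, and since $Z(v)$ is nowhere dense in $\Omega$ the pieces glue into a single real-analytic function $f$ on $\Omega$ with $u=vf$. The main obstacle is the coordinate rotation step: Lemma \ref{estimate} is stated under a coordinate-dependent non-degeneracy condition that generically fails, so one has to adapt coordinates to the leading harmonic part of $v$ at each zero before invoking it. Once that choice is made, the remaining patching argument is routine thanks to real-analyticity.
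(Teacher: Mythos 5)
Your proposal is correct and takes essentially the same route as the paper: use Lemma \ref{l4} for the formal division at each zero, rotate coordinates so that $v_{(k,0,\dots,0)}\neq 0$, then invoke Lemma \ref{estimate} for convergence and glue. You spell out the rotation step (the coefficient of $x_1^k$ after the rotation is $v_k(e)$) and the patching across $Z(v)$ slightly more explicitly than the paper does, but the underlying argument is identical.
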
\label{th:ra}
\begin{proof}
Indeed, for any $x_0 \in \Omega $ we know that the Taylor series at $x_0$ of $v$ is a divisor of a Taylor series at $x_0$ of $u$. The only obstacle is to show that formal power series $f=u/v$ centered at $x_0$  has a positive radius of convergence; and here Lemma \ref{estimate} comes into play. Represent $v$ as a sum of monomials: $v=\sum v_\alpha (x-x_0)^{\alpha}$.  If $v$ is not identically zero, we can take $k\ge 0$ such that $v_\alpha=0$  for any multi-index $\alpha$ with $|\alpha|<k$ and there is $\alpha$ with $|\alpha|=k$: $v_{\alpha} \neq 0$. Further, we can rotate the coordinate axes to obtain $v_{(k,0,\dots,0)}\neq 0$ and apply Lemma \ref{estimate}. Finally the estimate (\ref{coef}) implies the  absolute convergence of the power series of $f$ in some neighborhood of $x_0$.
\end{proof}


\subsection{Maximum and Minimum Principle for ratios of harmonic functions.}  
Let $u$ and $v$ be harmonic  in $\Omega$ and such that $Z(u)\supset Z(v)$.  We already know that there exists a real analytic in $\Omega$ function $f$ such that $u=fv$. 

\begin{theorem} Let  $f$ be as above. Then $f$ enjoys the maximum and minimum principle$,$ i.e. for any subdomain $O\Subset \Omega$
 $$\max \limits_{\partial O} f =  \max\limits_{ \bar O} f  \quad \& \quad 
 \min \limits_{\partial O} f =  \min\limits_{ \bar O} f.$$
\end{theorem}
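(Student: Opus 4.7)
The plan is to reduce the maximum principle for the ratio $f$ to the classical maximum principle for an auxiliary harmonic function applied separately on each nodal domain of $v$. Given $O\Subset\Omega$, set $c=\max_{\partial O}f$; this is finite since $f$ is real analytic on $\Omega\supset\bar O$ by Theorem~\ref{t1}, hence continuous on the compact set $\partial O$. I will show that $f\le c$ throughout $\bar O$; the reverse inequality is trivial, which yields the maximum principle, and the minimum principle follows by running the same argument with $c'=\min_{\partial O}f$ and reversed inequalities (or equivalently applied to $-u$).

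The auxiliary function is $g=u-cv$, harmonic in $\Omega$, with the key factorization $g=(f-c)v$ wherever $v\neq 0$. Fix a connected component $D$ of $O\setminus Z(v)$ and assume without loss of generality that $v>0$ on $D$. The relative boundary decomposes as $\partial D\subset(O\cap\partial D)\cup(\partial O\cap\partial D)$. On the first part $v=0$ and hence, crucially, $u=0$ as well by the hypothesis $Z(v)\subset Z(u)$, so $g\equiv 0$ there. On the second part $v\ge 0$ by continuity from $D$ and $f\le c$ by definition of $c$, so $g=(f-c)v\le 0$. Thus $g\le 0$ on all of $\partial D$, and the classical maximum principle applied to the harmonic function $g$ on $D$ gives $g\le 0$ on $D$. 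Dividing by $v>0$ yields $f\le c$ on $D$. The analogous argument on components where $v<0$ (applied to $-g$) gives $f\le c$ there too. Hence $f\le c$ on $O\setminus Z(v)$, and by continuity of $f$ on $\bar O$ this extends to all of $\bar O$.

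The main structural input, rather than a technical obstacle in the proof itself, is the inclusion $Z(v)\subset Z(u)$: it is precisely what forces $g$ to vanish on the internal nodal boundaries separating adjacent nodal domains, so that the maximum principle on each piece compares $g$ only to its values on $\partial O$. Without this containment, $g$ would generally be nonzero on $O\cap\partial D$ and the partition-into-nodal-domains argument would no longer transmit control from $\partial O$ to the interior.
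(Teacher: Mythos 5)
Your argument is essentially the paper's proof, reorganized slightly: the paper fixes $M=\max_{\partial O}f$, $m=\min_{\partial O}f$, partitions $O$ via the nodal domains of $v$, verifies $mv\le u\le Mv$ on the two pieces of the relative boundary (where $u=v=0$ on the interior piece since $Z(v)\subset Z(u)$), and then applies the classical maximum principle to $u-Mv$ and $u-mv$ on each piece; your $g=u-cv$ is exactly $u-Mv$, and your decomposition of $\partial D$ matches the paper's $\Gamma_0\cup\Gamma_1$. The proposal is correct and follows the same route.
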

\begin{proof}  Let $M=\max_{\partial O} f$ and $m=\min_{\partial O} f$. 
 Let $D \subset \Omega$ be any nodal domain  of $v$ that intersects $O$. Let  $\Gamma_0$ denote  $O \cap \partial D$  and  $\Gamma_1$ denote $\bar{D} \cap \partial O$.
 We may assume  $v$ is positive in $D$. It's clear that $mv\le u \leq M v $ on ${ \Gamma_1}$ and surely $mv\le u \leq M v  $ on $\Gamma_0$ because $u$ and $v$ vanish on $\Gamma_0$. Therefore $mv\le u \leq M v$ on $D\cap O$  by  the standard maximum principle. Hence we have $m\leq f \leq M$ everywhere in $\bar O$. Thus $\max \limits_{\bar O} f =  \max\limits_{ \partial O} f$ and $\min \limits_{ \bar O } f =  \min\limits_{ \partial O} f$.
\end{proof}

 \begin{remark}
The maximum principle for ratios of harmonic functions  is strict, i.e. a local maximum or minimum can not be attained at an interior point unless $f$ is a constant function.  In order to prove it one can show that if $f=0$ at some interior point, then $f$ changes a sign. The last claim can be proved with the help of Proposition \ref{pr1}.
\end{remark}



\section{Harnack Inequality for the ratios of harmonic  in $\R^3$ and Gradient Estimate}

In this section we prove Theorem \ref{th:H} and then deduce Theorem \ref{th:full}. We fix a harmonic function $w$ in a subdomain $\Omega$ of $\R^3$. 

\subsection{Structure of the nodal set of harmonic function in dimension three}
  Let $Z=Z(w)\subset\Omega$.  We say that a point $x\in Z$ is good if for each nodal domain $\Omega_i$ with $\partial \Omega_i \ni x$ the following holds: there exists a neighborhood $W$ of $x$ such that $\partial \Omega_i \cap W$ can be parametrized by a graph of a Lipschitz function, i.e. $\partial \Omega_i$ is Lipschitz in some neighborhood of $x$.    We say that a point $x \in Z$ is a bad point if it is not good. 

We have $Z=Z_0\cup Z_1$, where $Z_0=\{x: w(x)=0, \nabla w(x)\neq0\}$ and $Z_1=\{x: w(x)=0, \nabla w(x)=0\}$. In some neighborhood of each point of $Z_0$ the nodal set is a smooth surface and all points of $Z_0$ are good; $Z_1$ is the critical set of $w$, it is locally a finite union of analytic curves and a discrete set of points. We refer here to a general structure theorem for real analytic varieties  of \L ojasiewicz, see \cite{L} or \cite[Chapter 6.3]{KP}.  Consider any analytic curve $\Gamma$ in $Z_1$ and for each $x\in Z$ denote by $d(x)$ the depth of zero at $x$, i.e. $d(x)$ is the degree of the first non-zero  homogeneous polynomial in the Taylor series of $w$ at $x$. Suppose there is a sequence of points $\{x_i\}_{i=1}^{\infty}$ on $\Gamma$ converging to an interior point $x_\infty$ of $\Gamma$ such that $d(x_i) \geq k$ for some $k \in \mathbb{N}$, then the real analyticity of $w$ and $\Gamma$ implies $d(x)\geq k $ for any $x\in \Gamma$. Hence there exists $k \in \mathbb{N}$ such that $d(x)=k$ for all $x \in \Gamma$ except for at most a countable set of points on $\Gamma$ with at most two accumulation points at the ends of  the curve (see also the proof of Lemma 2.4 in \cite{HHN} for a similar decomposition of the critical set).
\begin{lemma}\label{l:good}
Let  $x$ be an interior point of $\Gamma$ and let  $U$ be a neighborhood of $x$ such that for any $y \in \Gamma \cap U$:
d(y)=k. Then $x$ is a good point. 
\end{lemma}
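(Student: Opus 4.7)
My approach is to show that, in suitable coordinates, the leading homogeneous polynomial of $w$ at $x$ is effectively a two-dimensional harmonic polynomial in the directions transverse to $\Gamma$; such a polynomial splits into $k$ pairwise distinct real linear factors, and propagating this decomposition analytically along $\Gamma$ endows $Z(w)$ with the local ``book'' structure of $k$ smooth surfaces meeting transversely along $\Gamma$, from which the Lipschitz nodal boundary follows immediately.

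First I would rotate coordinates centered at $x$ so that $T_x\Gamma$ is the $x_3$-axis and parametrize $\Gamma$ near $x$ as $\gamma(s)=(\gamma_1(s),\gamma_2(s),s+O(s^2))$ with $\gamma_1,\gamma_2=O(s^2)$. The hypothesis $d\equiv k$ on $\Gamma\cap U$ says $\partial^{\alpha}w(\gamma(s))\equiv 0$ as an analytic function of $s$ for every multi-index $|\alpha|<k$. Reading off the $s^0$ and $s^1$ Taylor coefficients in $s$ yields $\partial^{\alpha}w(0)=0$ for $|\alpha|<k$ together with $\partial_{x_3}\partial^{\alpha}w(0)=0$ for $|\alpha|<k$; equivalently, $\partial^{\beta}w(0)=0$ whenever $|\beta|\le k$ and $\beta_3\ge 1$. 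Hence the leading degree-$k$ polynomial $P_x$ of $w$ at $x$ is independent of $x_3$, and since $\Delta_{\R^3}P_x=0$ reduces under this independence to $\Delta_{\R^2}P_x=0$, the polynomial $P_x(x_1,x_2)$ is a two-dimensional homogeneous harmonic polynomial of degree $k$. Such polynomials have the form $c\,r^k\cos(k\theta-\theta_0)$ in polar coordinates and therefore factor over $\R$ into $k$ pairwise distinct linear forms, so $Z(P_x)\subset\R^3$ is a union of $k$ distinct planes, each containing $T_x\Gamma$.

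Repeating this analysis at every $\gamma(s)\in\Gamma\cap U$, using $\gamma'(s)$ in place of $\gamma'(0)$, shows that $P_{\gamma(s)}$ is a two-dimensional homogeneous harmonic polynomial of degree $k$ in the plane normal to $\gamma'(s)$, whose $k$ linear factors $L_1(s),\dots,L_k(s)$ stay distinct and depend real-analytically on $s$. Distinctness of the factors excludes monodromy along $\Gamma\cap U$, so a consistent labeling is possible, and the implicit function theorem, applied to the equation $w=0$ in a tube around $\Gamma$, extends each direction field $\ker L_j(s)$ to a real-analytic $2$-surface $S_j$ in a neighborhood $W\ni x$ containing $\Gamma\cap W$. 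The tangent planes of $S_1,\dots,S_k$ along $\Gamma$ are pairwise transverse, and matching tangent cones at each $\gamma(s)$ forces $Z(w)\cap W = S_1\cup\cdots\cup S_k$. Any nodal domain $\Omega_i$ meeting $x$ is then locally a connected component of $W\setminus\bigcup_j S_j$, bounded by two consecutive $S_j$'s joining along $\Gamma$ at a uniformly positive angle; in coordinates aligned with the bisector of that angle, $\partial\Omega_i\cap W$ becomes a ``V-shape'' over the bisector plane, which is the graph of a Lipschitz function, so $x$ is a good point.

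The main technical obstacle will be the propagation step: organizing the pointwise factorizations into globally defined real-analytic surfaces $S_j$ along $\Gamma$ and verifying that $Z(w)\cap W$ has no extra components. Distinctness of the factors $L_j(s)$, a consequence of the two-dimensional harmonic structure, is what prevents branching, enables the consistent labeling, and allows the implicit function theorem to operate uniformly in $s$; dimension counting on the tangent cones at each $\gamma(s)$ then rules out additional two-dimensional components of $Z(w)$.
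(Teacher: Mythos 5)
Your first paragraph and a half essentially reproduce the paper's key observation: since every partial derivative of $w$ of order at most $k-1$ vanishes identically on $\Gamma$, differentiating along $\Gamma$ at $\gamma(s)$ kills all degree-$k$ Taylor coefficients with a component in the tangent direction, so the leading homogeneous polynomial $P_{\gamma(s)}$ depends only on the two transverse variables and is therefore a two-dimensional homogeneous harmonic polynomial of degree $k$, hence of the form $c\,\rho^k\cos(k\theta-\theta_0)$. Up to this point the proposal is sound and matches the paper.

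The gap is in the propagation step, and it is precisely the step you flag as the ``main technical obstacle'' but then wave through. You claim that the $k$ linear factors of $P_{\gamma(s)}$ extend, via the implicit function theorem applied to $w=0$ in a tube around $\Gamma$, to $k$ real-analytic surfaces $S_1,\dots,S_k$ such that $Z(w)\cap W=S_1\cup\cdots\cup S_k$. But the implicit function theorem cannot be applied to $w=0$ at any point of $\Gamma$ when $k\ge 2$: there $\nabla w=0$ by hypothesis. Away from $\Gamma$ the nodal set is locally a smooth surface, but asserting that these local pieces glue into $k$ analytic surfaces passing through $\Gamma$ is exactly the statement that $Z(w)$ is locally $C^{\omega}$-equivalent to $Z(\Re(x+iy)^k)$. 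The authors explicitly discuss this after the proof of Lemma \ref{l:good}: they conjecture such a diffeomorphism exists (by a Kuiper--Kuo type argument for the analytic one-parameter family $p_t$), but state they could not find or construct such a result. So the structural claim your argument rests on is an open problem, not a lemma you can invoke. The later steps (``matching tangent cones forces $Z(w)\cap W=S_1\cup\cdots\cup S_k$'', ruling out extra components by dimension counting) inherit this gap: without the surface decomposition there is nothing to match, and the error term of size $O(|X|^{k+1})$ uniformly in $z$ does not by itself force the nodal set to split into smooth sheets.

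The paper's actual proof avoids the classification of $Z(w)$ entirely. After the two diffeomorphic normalizations (straightening $\Gamma$ to the $z$-axis and normalizing the leading polynomial to $\Re(x+iy)^k$), it shows: (i) $\nabla w_2\ne 0$ off the $z$-axis because the gradient of $\Re(x+iy)^k$ dominates the $O(\rho^k)$ error; (ii) on the cones $|y|=x\tan\phi_1$ and $|y|=x\tan\phi_2$ the sign of $w_2$ is controlled, so the nodal set lies inside the union of sectors between them; and (iii) on that union $\partial_x w_2>0$, so each nodal-domain boundary is the graph of a function $x=g(y,z)$ with $|\nabla g|$ uniformly bounded where defined, hence Lipschitz by continuity across the axis. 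This sector-plus-monotonicity argument delivers the Lipschitz-boundary conclusion directly, without ever establishing that the nodal set consists of $k$ analytic surfaces, which is a genuinely stronger and (as of the paper) unproven statement. To repair your argument you would either have to supply the missing Kuiper--Kuo type normal form along $\Gamma$, or replace the surface-stratification step with a sector-confinement and monotonicity argument of the kind the paper uses.
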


The main idea of the proof is to consider the first non-zero term of the Taylor expansion of  $w$ at each point $y\in \Gamma\cap U$, it is a homogeneous harmonic polynomial of degree $k$ of two variables in the plane orthogonal to $\Gamma$ at $y$ and the gradient of this term restricted on the plane is bounded from below by the $(k-1)$st power of the distance to $y$.

\begin{proof} We assume without loss of generality that $x=0$.
Let $I$ be an interval containing zero and let $\Gamma:I\rightarrow\Omega$ be a parametrization  such that  $\Gamma(0)=0, \Gamma(t)=(x(t), y(t),t)$,  $\Gamma'(0)=(0,0,1)$. Assume further that $d(\Gamma(t))=k$ for  each $t\in(-r,r)$. Let $p_t(x,y,z)$ be the $k$th Taylor polynomial of $w$ at the point $\Gamma(t)$,
\[w(x,y,z)=p_t(x-x(t), y-y(t), z-t)+q_t(x-x(t),y-y(t),z-t),\] where $q_t$ is the remainder in the Taylor expansion. By the classical estimates of the derivatives of harmonic functions we have $|q_t(X)|\le C|X|^{k+1}$ and $|\nabla q_t(X)|\le C|X|^{k}$ for $|X|$ small enough uniformly in $t$, here $X$ denotes $(x,y,z)$. Clearly, $p_t$ is a homogeneous harmonic polynomial of degree $k$ whose coefficients are real analytic in $t$. 

Fix some point $t_0\in (-r,r)$ and let $v_0=\nabla\Gamma(t_0)=(x'(t_0), y'(t_0),1)$ be the tangent vector to $\Gamma$ at $\Gamma(t_0)$. Let $f$ be some partial derivative of $w$ of order $k-1$, $f=\partial^\alpha w$, $|\alpha|=k-1$. Then $f(\Gamma(t))=0$ when $-r<t<r$  and therefore $\langle\nabla f(\Gamma(t_0)), v_0\rangle=0$. On the other hand  $(\nabla f)(\Gamma(t_0))=\nabla (\partial^\alpha p_{t_0})(0)$
and all partial derivatives of $p_{t_0}$ of order $k$ are constants.
Hence $\langle\nabla \partial^\alpha p_{t_0}, v_0\rangle=0$. We know that $p_{t_0}(\xi)$ is a homogeneous polynomial of order $k$. Then $\langle \nabla p(\xi), v_0\rangle$ is a homogeneous polynomial of order $k-1$ and it can be written as $\sum_{|\alpha|=k-1}c_\alpha\xi^\alpha$. The coefficient $c_\alpha$ of that polynomial is equal to $(\alpha!)^{-1}\partial^{\alpha}\langle \nabla p_{t_0}, v_0\rangle=0$.
Therefore $p(\xi)$ does not depend on $\langle\xi , v_0\rangle$ (i.e.  $p_{t_0}(\xi + s v_0)=p_{t_0}(\xi)$ for any $s\in \mathbb{R}$). In other words, $p(\xi)$ is actually a homogeneous harmonic polynomial of degree $k$ of two variables in appropriate coordinates. Then there exists an orthogonal basis $\{a(t_0), b(t_0), v(t_0)\}$ such that $p(t_0)(X)=c(t_0)\Re\left\{\left(\langle X,a(t_0)\rangle+i\langle X, b(t_0)\rangle\right)^k\right\}$, $c(t_0)\neq 0$.

We may choose $a(t), b(t), c(t)$ to be real analytic functions in $t$, when $|t|$ is small enough,  with $|c(t)|>c_0>0$.
We denote $v(t)=\nabla \Gamma (t)=(x'(t), y'(t),1)$ and remark that $|v(t)|<1+\delta$ for small $|t|$. The projections of $a(t)$ and $b(t)$ onto the plane $\{z=0\}$ are denoted by $a_1(t)$ and $ b_1(t)$ respectively. We will also need the matrix $A(t)\in M_2$ which is the inverse of the matrix $[a_1(t), b_1(t)]$, it exists when $t$ is small enough and depends analytically on $t$.

Our aim is to show that each nodal domain of $w$ near the origin is a Lipschitz domain. We will perform some diffeomorphic changes of  variables to simplify the geometry of the  nodal set near zero. From this point we don't use the fact that the function $w$ is harmonic.

First, let us consider the map $F(x,y,z)=(x+x(z), y+y(z), z)$ defined on some neighborhood $U\subset\R^2\times[-r,r]$ of the origin; clearly it is a diffeomorphism. We define $w_1=w\circ F$.  Then $w_1$ vanishes on the $z$-axis with all its derivatives  up to order $k-1$. Let $X=(x,y,z)$, we have 
\begin{multline*}
w_1(x,y,z+t)=p_t(x+x(z+t)-x(t), y+y(z+t)-y(t),z)+O(|X|^{k+1})=\\
p_t(x+x'(t)z, y+y'(t)z, z)+ O(|X|^{k+1}),\ |X|\rightarrow 0.
\end{multline*}
Further, we have \begin{multline*}
p_t(x+x'(t)z, y+y'(t)z,z)=\\c(t)\Re\left\{\left(\langle(x,y,0)+zv(t),a(t)\rangle+\langle (x,y,0)+zv(t), b(t)\rangle\right)^k\right\}.
\end{multline*} 
Taking into account that $a(t)$ and $b(t)$ are orthogonal to $v(t)$, we conclude that $w_1(x,y,z+t)=c(t)\Re\{((X, a_1(t))+i(X,b_1(t)))^k\}+Q_t(X)$, where $|Q_t(X)|\le C|X|^{k+1}$ and $|\nabla Q_t(X)|\le C|X|^{k}$.

Next let $G(x,y,z)=(A(z)(x,y),z)$; it is a diffeomorphism in a neighborhood of the origin,  we consider $w_2(x,y,z)=c^{-1}(z)(w_1\circ G)(x,y,z)$. Then $w_2$ vanishes on $(0,0,z)$ for small $z$ with all its derivatives of order up to $k-1$ and $k$th Taylor polynomial of $w_2$ at each point $(0,0,z)$ is $\Re(x+iy)^k$. It suffices to show that the nodal domains of $w_2$ are Lipschitz near the origin. 

Let us fix $z_0$ and consider the plane $(x,y,z_0)$; the restriction of $w_2$ to this plane has the form $\Re(x+iy)^k+\tilde{q}_{z_0}(x,y)$ where the remainder satisfies $|\nabla \tilde{q}_{z_0}(x,y)|\le C(x^2+y^2)^{k/2}$, while the gradient of the main term is greater than or equal to $c(x^2+y^2)^{(k-1)/2}$.  It means that the gradient of $w_2$ does not vanish in $(B\setminus\{0\})\times(-r_1,r_1)$, where $B$ is a small enough two-dimensional ball around the origin.  

Let $D_0$ be the nodal domain of $w_2$ in $B_{2s}$  that contains  the point $(s,0,0)$ for $s>0$ is small enough.   
 Take any $\phi_1, \phi_2$ with $0<\phi_1<\frac{\pi}{2k}<\phi_2<\frac{\pi} {k}$. We consider the domain \[U=\{(x,y,z):\ |z|< r_0,\ 0<x<x_0,\ x \tan \phi_1 <|y|< x\tan \phi_2 \},\]
 which consists of two connected components. We want to show that $U\cup\{z=0\}$ contains $\partial D_0\cap V$ for some neighborhood $V$ of the origin. Note that for $x_0,r_0>0$ small enough, we have  
\begin{gather*}
w_2(x, x \tan \phi , z)= x^k(\cos \phi)^{-k}\cos k\phi+ O(|x|^{k+1})>0, \ |\phi|\le\phi_1,\\
w_2(x, \pm x \tan \phi_2 , z)= x^k(\cos \phi_2)^{-k}\cos k\phi_2+ O(|x|^{k+1})<0, \ \quad {\text{and}}\\  
 \partial_x w_2(x,y,z)\ge c(|x|^2+|y|^2)^{(k-1)/2}>0\ \text{ in } \Omega.
\end{gather*} 
 Then $(\partial D_0 \cap B_{\varepsilon}) \setminus (\{0,0,z\})\subset \Omega$, where $\varepsilon > 0$ is small enough. Further, $\partial D_0$ is a graph over the plane $(0,y,z)$, and by the implicit function theorem if $\partial D_0$ is given by $(g(y,z),y, z)$ then 
\[|\nabla g(y,z)|\le|\nabla w_2(g(y,z),y,z)|/|\partial_x w_2(g(y,z), y,z)|\le C,\quad  {\text{when}}\quad  y\neq 0.\]
Then $g(y,z)$ is a continuous function differentiable everywhere except for the line $\{y=0\}$ with uniformly bounded derivative, hence it is Lipschitz  in $B_\varepsilon$.
 The argument above shows that there are exactly $2k$  nodal domains in $B_\varepsilon$ and each of them can be represented by a graph of a Lipschitz function.
\end{proof}

The proof above suggests that  there exist a neighborhood $V$ the origin and a diffeomorphism $H:V\rightarrow B$ such that $w(x,y,z)=g_k(H(x,y,z))$, where $g_k(x,y,z)=\Re(x+iy)^k$. We conjecture that it can be constructed like in the proof of Kuiper-Kuo theorem in \cite{C}, the difference is that one needs to apply it to an analytic one-parameter family of functions; however we were not able to find such a construction in the literature.

\subsection{Proof of Theorem \ref{th:H}}
 
First, Lemma \ref{l:good} implies 
\begin{corollary} \label{count}
The set of bad points in $Z\cap V$ is at most countable  set with a finite number of accumulation points.
\end{corollary}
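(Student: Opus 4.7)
The plan is to combine Lemma \ref{l:good} with the structural description of $Z_1$ that was recorded just before its statement. First I would observe that every point of $Z_0=\{w=0,\nabla w\neq 0\}$ is automatically good, because the implicit function theorem makes the nodal set a smooth (hence Lipschitz) surface near such a point. Consequently all bad points lie in the critical set $Z_1$.

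Next I would invoke the \L ojasiewicz structure theorem in the form already used in the paragraph preceding Lemma \ref{l:good}: inside any relatively compact subdomain, $Z_1$ is a finite union of real analytic arcs $\Gamma_1,\dots,\Gamma_N$ together with a discrete (hence finite) collection of isolated singular points. Moreover, on each arc $\Gamma_j$ there is an integer $k_j$ such that the depth function $d(\cdot)$ equals $k_j$ off an at most countable exceptional subset $E_j\subset \Gamma_j$, and the only possible accumulation points of $E_j$ are the two endpoints of $\Gamma_j$.

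With these two inputs in hand, Lemma \ref{l:good} tells me that every interior point of $\Gamma_j$ lying outside $E_j$ is a good point. Hence, inside $Z\cap V$, the set of bad points is contained in
\[
\bigcup_{j=1}^N E_j \;\cup\; \{\text{endpoints of }\Gamma_j:\ j=1,\dots,N\}\;\cup\;\{\text{isolated points of }Z_1\cap V\}.
\]
The first union is at most countable; the other two are finite. Therefore the whole set is at most countable, and its accumulation points are confined to the finite collection formed by the (at most $2N$) endpoints of the arcs together with the finitely many isolated points of $Z_1$ in $V$, giving finitely many accumulation points in total.

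I do not anticipate a real obstacle: the argument is essentially bookkeeping once Lemma \ref{l:good} and the \L ojasiewicz decomposition are granted. The only subtle point is to treat intersections of two analytic arcs $\Gamma_i,\Gamma_j$ as isolated singular points of the analytic variety $Z_1$, so that they are automatically absorbed into the finite discrete part of the decomposition and do not produce extra accumulation points.
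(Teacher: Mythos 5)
Your argument is correct and follows exactly the route the paper intends: the paper simply states that the corollary follows from Lemma \ref{l:good} together with the structural facts about $Z_0$, $Z_1$, and the depth function $d(\cdot)$ recorded in the paragraph preceding that lemma, and your write-up is precisely the bookkeeping that makes this implication explicit.
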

 This corollary will be used in the proof of the following theorem.
\begin{theorem}\label{th:Ha}
 Let  $u$ and $v$ be any harmonic in $\Omega$ functions with $Z(u)=Z(v)=Z$ let $x_0$ be a point in $Z$.  Let $f$ be the ratio of $u$ and $v$.
 There exists a positive constant $C=C(Z,x_0)$ and a ball $B_r(x_0)\subset \Omega$ with center $x_0$ and some radius $r$ such that
$$ \inf\limits_{ B_r(x_0)} |f| \geq  C\sup\limits_{ B_r(x_0)} |f| $$
\end{theorem}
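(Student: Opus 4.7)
My plan is to reduce the claim to a Harnack-type comparison of $|f|$ on the sphere $S_r=\partial B_r(x_0)$ and then build that comparison out of the boundary Harnack principle near good points of $Z\cap S_r$ and the classical interior Harnack principle on the rest of $S_r$. First I note that $f$ is nowhere vanishing in $\Omega$: applying Theorem \ref{th:ra} both to $u/v$ and to $v/u$ (using $Z(u)=Z(v)$) gives that $f$ and $1/f$ are both real analytic, so $f$ is a real analytic, non-vanishing, hence constant-sign function on the connected ball $B_r(x_0)$. Combined with the maximum/minimum principle for $f$ this yields $\sup_{B_r}|f|=\sup_{S_r}|f|$ and $\inf_{B_r}|f|=\inf_{S_r}|f|$, so it suffices to show $\sup_{S_r}|f|\le C\inf_{S_r}|f|$ for an appropriate $r$.

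To choose $r$, I invoke Corollary \ref{count}: in a sufficiently small neighborhood of $x_0$ the bad points of $Z$ form at most a countable set with finitely many accumulation points. Since each bad point and each accumulation point lies on exactly one sphere centered at $x_0$, all but countably many $r$ in some interval $(0,r_0)$ give a sphere $S_r$ that meets only good points of $Z$. I fix such an $r$.

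For the sphere-Harnack inequality I use a covering-and-chaining argument. Around each good point $z\in Z\cap S_r$, Lemma \ref{l:good} provides a small neighborhood $U_z$ in which every adjoining nodal domain has Lipschitz boundary. In each such nodal domain $D$, the boundary Harnack principle applied to sign-adjusted positive harmonic $u,v$ vanishing on the Lipschitz portion of $\partial D\cap U_z$ gives $\sup_{U'_z\cap D}|f|\le C_z\inf_{U'_z\cap D}|f|$ on a slightly smaller neighborhood $U'_z$. Since $f$ is continuous at $z$ with a single value $f(z)$ across all adjacent nodal domains, both sup and inf of $|f|$ on $\bar U'_z\cap S_r$ are comparable to $|f(z)|$. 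Covering the compact set $Z\cap S_r$ by finitely many such $U'_{z_i}$ and covering the compact complementary piece $S_r\setminus\bigcup_i U'_{z_i}\subset\Omega\setminus Z$ by finitely many small balls, each contained in a single nodal domain and supporting the classical Harnack inequality for $u$ and $v$ separately, I obtain finitely many local oscillation bounds for $|f|$. Chaining them through the overlaps on the connected set $S_r$ produces the desired $\sup_{S_r}|f|\le C\inf_{S_r}|f|$ with $C=C(Z,x_0)$.

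The most delicate step is the transfer of comparison across a good point $z$: one must piece together the boundary Harnack estimate within each of the (possibly several) Lipschitz nodal domains meeting $z$, using the continuity of $f$ at $z$, to conclude that $|f|$ is two-sidedly comparable to $|f(z)|$ on a full neighborhood of $z$ in $S_r$. Once this local step is in place, the remaining global chaining on the connected sphere and the reduction from $B_r$ to $S_r$ via the maximum principle are routine compactness arguments.
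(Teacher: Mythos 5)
Your proposal is correct and matches the paper's strategy: choose a sphere around $x_0$ that avoids the (countable, locally finitely-accumulating) set of bad points of $Z$, combine the boundary Harnack principle in the Lipschitz nodal domains with the interior Harnack principle away from $Z$ to get a two-sided comparison of $|f|$ on that sphere, and then pull the estimate inward by the maximum/minimum principle for the ratio. The only cosmetic differences are that the paper selects an entire bad-point-free annulus $Q$ rather than a single sphere and then applies boundary Harnack once per nodal domain meeting the middle sphere (taking a product of the constants $C_i$ and an induction on nodal domains in place of your explicit local covering-and-chaining); these are equivalent routes to the same sphere estimate. Your observation that $f$ is nowhere vanishing (via dividing both $u/v$ and $v/u$) is a useful point the paper leaves implicit, and it is indeed what justifies replacing $\inf_{S_r} f$ by $\inf_{S_r}|f|$ in the minimum-principle step.
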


\begin{proof} We already know from Theorem \ref{t1} that $f$ is a continuous function in $\Omega$. 
Corollary \ref{count} implies that there exists a spherical layer $Q:=B_{R}(x_0)\setminus \overline{B_{r}}(x_0)$ with $R> r \geq 0$ and $\overline{B_{R}}(x_0) \subset \Omega $ such that every $x \in Q\cap Z$ is a good point. Consider the sphere $S$ of radius $\frac{r+R}{2}$ with center at $x_0$.   Let $\Omega_i$ be any nodal domain with non-empty intersection with $S$ and let $S_i$ denote $S\cap \overline{\Omega_i}$. Note that $S_i$ is compact subset of $Q$. By the boundary Harnack principle for Lipschitz domains (we refer the reader to \cite{A}, \cite[Chapter 8.7]{AG},  and the references therein), there exists a constant $C_i$ such that $\max\limits_{S_i} |f| \leq C_i \min\limits_{S_i} |f|$. Put $C:= \prod \limits_{i} C_i$. It can be easily checked by induction on the number of nodal domains in $Q$ that
 $$\max\limits_{S} |f| \leq C \min\limits_{S} |f|.$$
  By the maximum and minimum principle for harmonic fractions we have $\sup_{B_r} |f| \leq \max_S |f|$ and $ \inf\limits_S |f| \leq \inf\limits_{B_r}|f|$. Thus  \[\sup\limits_{ B_r(x_0)} |f|\leq C  \inf\limits_{ B_r(x_0)} |f| .\]
\end{proof}
   Now, Theorem \ref{th:H} follows from the previous theorem and standard compactness arguments.


\subsection{Proof of Theorem \ref{th:full}}
Let $Z$ be a zero set of some harmonic function $w$ in $\Omega\subset \R^3$. 
Let $x_0\in\Omega\setminus Z$ and define \[\F_0(Z)=\{u:\Omega\rightarrow\R: \Delta u=0, Z(u)=Z, u(x_0)=w(x_0)\}.\] 
Clearly for any $u$ with the nodal set  $Z(u)=Z$ there exists a constant $c_u$ such that $c_uu\in \F_0(Z)$.
\begin{lemma} \label{l5}
   Consider a point $y \in \Omega_0$. There exist a neighborhood $V_y$ of $y$, $V_y\subset \Omega_0$, and positive constants $A_y$, $R_y$ such that for any $x\in V_y$ the inequality 
	\[\frac{|D^{\alpha} (f)|}{\alpha !}(x)\leq A_y R_y^{|\alpha|}\] holds whenever $f= u/v$ for some $u,v\in \F_0(Z)$.
\end{lemma}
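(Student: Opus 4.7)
The plan is to apply the local division estimate of Lemma~\ref{estimate} at the point $y$, and then transfer the resulting coefficient bounds for $f$ into a uniform $L^\infty$ bound on $|f|$ in a small ball around $y$ and, via the Cauchy estimates for analytic functions, into derivative bounds throughout a smaller neighborhood $V_y$.

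The first step is to obtain, uniformly over $u,v\in \F_0(Z)$, upper bounds on the Taylor coefficients of $u$ and $v$ at $y$, together with a positive lower bound on one specific Taylor coefficient of $v$. For the upper bounds, the normalization $u(x_0)=v(x_0)=w(x_0)$ gives $(u/w)(x_0)=(v/w)(x_0)=1$; applying Theorem~\ref{th:H} to the pairs $(u,w)$ and $(v,w)$ (which have the same nodal set $Z$) produces a constant $C_1=C_1(y,Z,\Omega,w)$ and a closed ball $\overline{B_{2\rho}(y)}\subset\Omega$ on which $C_1^{-1}\le|u/w|,|v/w|\le C_1$. In particular $|u|,|v|\le C_1\sup_{\overline{B_{2\rho}(y)}}|w|$, and the Cauchy estimates for harmonic functions then yield $|u_\alpha|,|v_\alpha|\le a r^{|\alpha|}$ at $y$, with constants $a,r$ depending only on $y,Z,\Omega,w$.

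For the lower bound, let $k$ be the depth of the zero of $w$ at $y$ (with $k=0$ when $w(y)\ne 0$); by Theorem~\ref{t1}, the function $g:=v/w$ is real-analytic and nonvanishing. The identity $v=g\cdot w$ forces the $k$-th homogeneous Taylor polynomial of $v$ at $y$ to equal $g(y)\cdot w_k$, where $w_k$ is the $k$-th homogeneous polynomial of $w$ at $y$. After a rotation of coordinates (depending only on $w$ and $y$), we may assume that the coefficient $c_0$ of $x_1^k$ in $w_k$ is nonzero. Since $|g(y)|\ge C_1^{-1}$ from the Harnack bound above, the corresponding coefficient $v_{\tilde k}$ satisfies $|v_{\tilde k}|\ge |c_0|/C_1=:c>0$ uniformly in $v\in\F_0(Z)$. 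With all hypotheses of Lemma~\ref{estimate} verified uniformly, we obtain $|f_\alpha(y)|\le A R^\alpha$ for the Taylor coefficients of $f$ at $y$, with $A$ and $R$ depending only on $y,Z,\Omega,w$. Summing this geometric series shows that $f$ is analytic and uniformly bounded, say by $M_y$, on some ball $B_{\rho_y}(y)$; applying the Cauchy estimates for analytic functions on the smaller ball $V_y:=B_{\rho_y/2}(y)$ gives $|D^\alpha f(x)|/\alpha!\le M_y(2/\rho_y)^{|\alpha|}$ for every $x\in V_y$ and every multi-index $\alpha$, which is the required bound with $A_y:=M_y$ and $R_y:=2/\rho_y$.

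The main obstacle is the uniform lower bound on $v_{\tilde k}$: one must control a Taylor coefficient of $v$ of possibly high order at a point that may lie on the singular part of $Z$, without any knowledge of $v$ beyond $Z(v)=Z$ and the normalization at $x_0$. The mechanism that unlocks this is that Theorem~\ref{t1} supplies a nonvanishing real-analytic ratio $g=v/w$, which locks the leading homogeneous jet of $v$ at $y$ to that of $w$ up to the scalar $g(y)$; then Theorem~\ref{th:H} keeps $|g(y)|$ bounded away from $0$ uniformly in $v\in\F_0(Z)$.
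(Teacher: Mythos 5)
Your proof is correct and takes essentially the same route as the paper: apply Theorem~\ref{th:H} (combined with the normalization at $x_0$) to get two-sided bounds on $u/w$ and $v/w$ near $y$, deduce uniform Cauchy-type upper bounds $|u_\alpha|,|v_\alpha|\le ar^{|\alpha|}$ at $y$ and a uniform lower bound on $|v_{\tilde k}|$, then invoke Lemma~\ref{estimate}. The paper obtains the lower bound $|v_{(k,0,\dots,0)}|\ge C^{-1}|w_{(k,0,\dots,0)}|$ by first showing (via Lemmas~\ref{l3} and~\ref{dl2}) that the leading homogeneous parts of $u$, $v$, and $w$ at $y$ all have the same degree $k$ and are scalar multiples of one another, and then identifying the scalar by l'H\^opital's rule; you instead read off the leading jet directly from the factorization $v=g\cdot w$ of Theorem~\ref{t1}, which is the same fact packaged slightly differently. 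You also make explicit the final step — summing the coefficient estimate at $y$ to get a uniform $L^\infty$ bound for $f$ on a fixed small ball and then applying Cauchy estimates to pass from a bound at $y$ to a bound at all $x\in V_y$ with constants independent of $u,v$ — which the paper leaves somewhat implicit (it is gestured at in the proof of Theorem~\ref{th:full}). This is a genuine small improvement in completeness, though not a different method.
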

\begin{proof} To simplify the notation let $y=0$.
 By Theorem \ref{th:H} there exist a constant $C=C(y,w)$ and  a neighborhood  $V$ of $0$ such that $\frac{1}{C} \leq |\frac{u}{w}| (x) \leq C$ and  $\frac{1}{C} \leq |\frac{v}{w}| (x) \leq C$ for any $x \in  V$.  Let $M=\sup_V|w|$, then $|u|$ and $|v|$ are bounded by $CM$ in $V$.  By the standard Cauchy estimates, there exist positive numbers $a=a(y,w)$ and $r=r(y,w)$ such that
\begin{equation}
|u_\alpha|\leq a r^{|\alpha|} \hbox{ and } |v_\alpha|\leq a r^{|\alpha|}.
\end{equation}
Let $w=\sum\limits_{i=k}^\infty w_i$ be the decomposition of $w$ into the sum of homogeneous harmonic polynomials and let $w_k$ be the non-zero polynomial of the least degree $k$. Let us rotate the coordinate lines to make $w_{(k,0,\dots , 0)} \neq 0 $. Let further $u=\sum\limits_{i=l}^\infty u_i$ and $v=\sum\limits_{i=m}^\infty v_i$ be analogous sums for $u$ and $v$.  By Lemma \ref{l3} we have $Z(w_k)=Z(u_l)=Z(v_m)$, then Lemma \ref{dl2} implies $k=l=m$ and $u_k= c_1 w_k$, $v_k= c_2 w_k$, where $c_1$, $c_2$ are non-zero constants. 
By l'H\^{o}pital's rule $\frac{v_{(k,0,\dots , 0)}}{w_{(k,0,\dots , 0)}}=\frac{v}{w}(0)$. Since $\frac{1}{C} \leq |\frac{v}{w}| (x) \leq C$, we have $|v_{(k,0,\dots , 0)}| \geq C^{-1}|w_{(k,0,...,0)}|$.  Now we are in position to apply  Lemma \ref{estimate}, the constants $A_y, R_y$ depend on $y, w$ but does not depend on $u$ and $v$.

\end{proof}

 Now our main result  is a straightforward consequence of the previous lemma and a standard compactness argument.
 
\begin{proof}[Proof of Theorem \ref{th:full}]
 Real analyticity of $f$ was proved in Theorem \ref{t1}.
 Lemma \ref{l4} claims that for any $y$ there exist $A_y$ and $R_y$ such that $\frac{|D^\alpha f(y)|}{\alpha !} \leq A_y R_y^{\alpha}$ for any multi-index $\alpha$. Since $f$ is real-analytic, then there exist a neighborhood of $y$ denoted by $V_y $ such that    $\frac{|D^\alpha f(x)|}{\alpha !} \leq \tilde A_y (\tilde R_y)^{\alpha}$  for any $x\in V_y$. Note that $K \subset \bigcup\limits_{y\in K} V_y $. Since $K$ is a compact set, there exist a finite set $\{y_1,y_2 \dots, y_m\}$ such that $K \subset \bigcup\limits_{i \in \{1, \dots, m\} } V_{y_i} $. Take $A:= \max \limits_{i} \{\tilde A_{y_i}\}$ and $R:=\max \limits_{i} \{\tilde R_{y_i}\}$.
\end{proof}

\if false

\subsection{Why does it work in $\mathbb{R}^3 $?}

 The goal of this section is to obtain the Harnack inequality for harmonic fractions in $\mathbb{R}^3$. In dimension two there is a natural and successful way - to apply BHP. It is possible due to the fact that a nodal domain in $\mathbb{R}^2$ is locally a graph of a Lipschitz function. But we can not apply BHP directly in $\mathbb{R}^3$ like it was possible in $\mathbb{R}^2$. A nodal domain in $\mathbb{R}^3$ is not necessarily Lipschitz, it might have a complicated topological structure, may not be represented locally as a graph of a function and for the last reason it can not be regarded as a Holder domain too. We do not expext a nodal domain in $\mathbb{R}^3$ to necesserily enjoy the Harnack chain condition (see example ... ). Nevertheless, it is possible to combine BHP and MP and obtain a proof. 
Unfortunately, the proof is purely 3-dimensional and we were not able to extend it to higher dimensions.
\begin{proof}
  Here will be proof.
\end{proof}
\fi 

\section{Concluding remarks}
\subsection{Nodal sets of harmonic functions and harmonic polynomials}
It is an interesting question to which extend the nodal set of a harmonic function (or more generally of a solution to some elliptic equation)  resembles the nodal set of its first non-zero homogeneous polynomial. We refer the reader to \cite{HHN} and references therein. In dimension two the nodal set of harmonic functions and solutions to elliptic equations locally look like regular intersections of curves. In higher dimensions we implicitly used some information on the nodal sets to divide harmonic functions sharing the same zeros in $\R^n$ and prove that most of the points of the nodal set in dimension three are good. However the following example shows that starting from dimension three the nodal sets may have complicated local geometry.

\begin{example}\label{e:NH} Let $H(x,y,z)=x^2-y^2+z^3-3x^2z$, clearly it is a harmonic polynomial. The intersection of its nodal set with a plane $\{0,0,z\}$ is the union of two orthogonal lines when $z=0$ and of two hyperbolas for $z\neq 0$. There are only two nodal domains $\Omega_1$ and $\Omega_2$ (not four like for the case of $x^2-y^2$) and those nodal domains are not Lipschitz. Moreover the Harnack chain condition does not hold for $\Omega_{1,2}$  (see \cite{A, JK} for the definition). We don't know if the boundary Harnack principle is valid for $\Omega_{1,2}$.
\end{example}

\subsection{Differential equation for the ratio}
One can think about the ratio $f$ as a positive solution of the following second order degenerate  elliptic equation
\[
\dv(v^2\nabla f)=0.\]
Unfortunately the coefficient is very singular, $v^2$ does not belong to the Muckenhoupt class $A_2$ when $v$ changes sign, and we are not able to apply the Harnack inequality for degenerate elliptic operators with $A_2$ condition on weight (see \cite{FKS})  here. It would be interesting to see if one can use harmonicity of $v$ to obtain Harnack inequality for positive solutions of such equations in $\R^n$. A more delicate equation for the $\log f$ was used in \cite{M} in dimension 2. 

Another interesting question is when the equation above admits any non-trivial positive solutions and how large this family may be.

\subsection{Zeroes and Division of Real-valued Polynomials in Several Variables}
We suggest a proof of Lemma \ref{dl2} in this subsection.
The following division follows from general results in algebraic geometry,  we borrowed it from \cite[Chapter 5]{MOV}.

\begin{lemma*}[Division Lemma, Mateu, Orobitg, Verdera]\label{dl1}
Let $Q$ and $P$  be polynomials in $\mathbb{R}[x_1, x_2, \dots, x_n]$. Suppose that $H^{n-1}(Z(P)\cap Z(Q))>0$ and $Q$ is  irreducible. Then there is $R \in \mathbb{R}[x_1, x_2, \dots, x_n]$ such that
$P=QR $.
\end{lemma*}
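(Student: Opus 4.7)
The plan is to reduce to the Hilbert Nullstellensatz over $\C$, using the $H^{n-1}$-dimension hypothesis to bridge between real and complex zero sets. I first regard $Q$ as an element of $\C[x_1,\ldots,x_n]$ and split into cases according to whether $Q$ is irreducible over $\C$. Because $Q$ is $\R$-irreducible, complex conjugation must permute its $\C$-irreducible factors without fixed points (any conjugation-fixed irreducible factor would, up to a unit, be a proper real factor of $Q$), so either $Q$ is $\C$-irreducible or, up to a real constant, $Q = q\bar q$ for a single $\C$-irreducible $q$ not associate to $\bar q$.

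The second case is eliminated by a dimension count. For $x\in\R^n$ one has $\bar q(x)=\overline{q(x)}$, so $Q(x)=0$ forces both $q(x)=0$ and $\bar q(x)=0$; therefore $Z(Q)$ is contained in the complex algebraic set $V_\C(q)\cap V_\C(\bar q)$, which is a proper subvariety of the irreducible hypersurface $V_\C(q)$ and hence has complex dimension at most $n-2$. Its real points then have real Hausdorff dimension at most $n-2$, making $H^{n-1}(Z(Q))=0$ and contradicting the hypothesis. So $Q$ is $\C$-irreducible and $V_\C(Q)$ is an irreducible complex hypersurface. The same dimension argument shows that $P$ must vanish on all of $V_\C(Q)$: the intersection $W := V_\C(P)\cap V_\C(Q)$ is either all of $V_\C(Q)$ or a proper subvariety of complex dimension at most $n-2$, and the latter would give $H^{n-1}(W\cap\R^n)=0$, contradicting $Z(P)\cap Z(Q)\subset W$ together with the hypothesis. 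Hilbert's Nullstellensatz and primality of $(Q)$ in $\C[x_1,\ldots,x_n]$ then yield $P = QR$ for some $R\in\C[x_1,\ldots,x_n]$, and $\overline{QR}=\overline P=P=QR$ combined with $\overline Q=Q$ forces $\overline R = R$, so $R\in\R[x_1,\ldots,x_n]$.

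The main obstacle I expect is the real/complex dimensional comparison, namely the assertion that the real locus of a complex algebraic set of complex dimension $d$ has real Hausdorff dimension at most $d$. This is standard in real algebraic geometry (via Whitney stratification of real algebraic sets, or by decomposing into irreducible components and projecting generically onto a $d$-dimensional coordinate subspace), but it is the one place where some care is required; once granted, everything else reduces to routine commutative algebra.
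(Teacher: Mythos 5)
The paper itself does not prove this lemma: it is quoted verbatim from \cite[Chapter 5]{MOV} with the remark that it ``follows from general results in algebraic geometry,'' so there is no in-paper argument to compare against. Your proof is correct and self-contained. The dichotomy for an $\R$-irreducible $Q$ over $\C$ is right: conjugation is an involution on the monic $\C$-irreducible factors, a fixed factor would be a proper real divisor unless it is all of $Q$, and a size-two orbit $\{q,\bar q\}$ gives the real divisor $q\bar q$, forcing (up to a real constant) $Q$ to be $\C$-irreducible or equal to $q\bar q$ with $q\not\sim\bar q$. The elimination of the second case, the deduction that $V_\C(Q)\subset V_\C(P)$, the Nullstellensatz step (with $(Q)$ prime, hence radical), and the conjugation argument giving $R\in\R[x_1,\dots,x_n]$ are all sound. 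The one point you correctly flag, that the real locus of a complex algebraic set of complex dimension $d$ has Hausdorff dimension at most $d$, is indeed standard: $V\cap\R^n$ is a real algebraic set, its Zariski closure $W$ in $\C^n$ satisfies $W\subset V$, and the real (equivalently Hausdorff) dimension of a real algebraic set equals the complex dimension of its Zariski closure because $I_\C(V\cap\R^n)=I_\R(V\cap\R^n)\,\C[x]$ and Krull dimension is preserved under the base change $\R\hookrightarrow\C$. Since a set of Hausdorff dimension $\le n-2$ has vanishing $H^{n-1}$-measure, both of your dimension counts close. In short: this is a correct and arguably more explicit argument than the citation the paper relies on.
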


 We are going to replace irreducible polynomial  $Q$  by a homogeneous harmonic one to prove Lemma \ref{dl2}.

 We write $S \sqsubset T$  in case sets $S$, $T \subset \mathbb{R}^{n}$ satisfy $H^{n-1}(S \setminus T)=0$. 
  Lemma \ref{dl2} follows from two propositions below.

\begin{prop} \label{pr1}
If $Q$ is a non-zero homogeneous harmonic polynomial and $Q_1$ is a non-constant divisor of $Q$, then $Q_1$ changes sign  and $H^{n-1}(Z(Q_1))>0$.
\end{prop}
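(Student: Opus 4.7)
The plan is to split the proposition into its two conclusions: that $Q_1$ changes sign, and that $H^{n-1}(Z(Q_1))>0$.

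First I would reduce to the case in which $Q_1$ is itself homogeneous. Since $\R[x_1,\dots,x_n]$ is a UFD and $Q$ is homogeneous, every divisor of $Q$ is homogeneous up to a nonzero scalar: writing $Q_1$ and its cofactor as sums of homogeneous components and comparing top- and bottom-degree parts of the product $Q_1\cdot (Q/Q_1)=Q$ forces the lower-degree components to vanish in the integral domain $\R[x_1,\dots,x_n]$. Once $Q_1$ is recognized as a non-constant homogeneous factor of the homogeneous harmonic polynomial $Q$, the Brelot--Choquet theorem (\cite{BC}), which the authors have flagged as the main algebraic input, gives at once that $Q_1$ changes sign in $\R^n$.

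For the Hausdorff-measure conclusion, I would argue more generally that any non-constant real polynomial $P$ that changes sign on $\R^n$ has $H^{n-1}(Z(P))>0$. Write the irreducible factorization $P=c\prod_i P_i^{m_i}$ and split the factors according to the parity of $m_i$; the even-multiplicity part $\prod_{m_i\text{ even}}P_i^{m_i}$ is everywhere nonnegative, so the sign of $P$ on the complement of its zero set is controlled by the squarefree polynomial $\tilde P:=\prod_{m_i\text{ odd}}P_i$. Since $P$ changes sign, $\tilde P$ is non-constant and changes sign, and $Z(\tilde P)\subset Z(P)$. Because $\tilde P$ is squarefree, the singular locus $\{\tilde P=0,\ \nabla\tilde P=0\}$ is a proper algebraic subvariety of $Z(\tilde P)$ of real dimension at most $n-2$; as $\tilde P$ is non-constant with non-empty zero set, its regular part is non-empty, and at any regular point the implicit function theorem exhibits a smooth $(n-1)$-dimensional hypersurface inside $Z(\tilde P)\subset Z(Q_1)$. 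This yields $H^{n-1}(Z(Q_1))>0$ immediately.

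The only delicate step I expect is the passage from $Q_1$ changes sign to $\tilde P$ changes sign, i.e.\ verifying that at least one odd-multiplicity irreducible factor is present and that one may indeed find points where $\tilde P$ takes both signs away from the zero set of the even part. This is a short but genuinely needed bookkeeping argument, relying on the fact that the zero set of the even-multiplicity factor, being a proper algebraic subset, cannot cover any open set. Everything else --- Brelot--Choquet, homogeneity of divisors, dimension of the singular locus of a reduced hypersurface, and the implicit function theorem --- is classical and used off the shelf.
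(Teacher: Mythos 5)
Your proposal takes a genuinely different route from the paper on both halves of the statement, and the second half as written has a real gap.

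For the sign-change conclusion, you invoke Brelot--Choquet as a black box, whereas the paper actually reproves it inline: assuming $Q_1\ge 0$ (after homogeneity forces $Q_2$ to be homogeneous of degree $<\deg Q$), it integrates $Q_2Q=Q_1Q_2^2\ge 0$ over spheres $S_r$, uses orthogonality of the spherical harmonic $Q$ to all polynomials of lower degree to get $\int_{S_r}Q_2Q\,d\sigma=0$, concludes $Q_2Q\equiv 0$, and hence $Q_1\equiv 0$, a contradiction. Citing \cite{BC} is perfectly fine, but the inline orthogonality argument is short enough that the paper keeps it self-contained.

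For the Hausdorff-measure conclusion, the paper simply cites the dimension lemma in \cite[Chapter 5]{MOV} (a sign-changing polynomial has $(n-1)$-dimensional zero set), while you try to prove it. The step that fails as written is the assertion that ``because $\tilde P$ is squarefree, the singular locus $\{\tilde P=0,\ \nabla\tilde P=0\}$ is a \emph{proper} algebraic subvariety of $Z(\tilde P)$.'' Squarefreeness over $\R$ does give that the (real) singular locus has dimension $\le n-2$, since $\tilde P$ and its partials have no common irreducible factor, so their common complex zero set has codimension $\ge 2$. But it does \emph{not} by itself give properness inside $Z(\tilde P)$: the polynomial $\tilde P=x^2+y^2$ in $\R^2$ is irreducible, hence squarefree, yet its real zero set $\{0\}$ \emph{equals} its real singular locus, and the regular part is empty. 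Likewise your phrase ``as $\tilde P$ is non-constant with non-empty zero set, its regular part is non-empty'' is refuted by the same example. What actually rescues the argument is the sign-change hypothesis, which you have available but do not deploy at this point: sign change forces $Z(\tilde P)$ to have real dimension exactly $n-1$ (a set of dimension $\le n-2$ cannot separate an open ball), and only \emph{then} does the $\le(n-2)$-dimensional singular locus become a proper subset, so that a regular point exists and the implicit function theorem gives a smooth piece of hypersurface. That implication ``sign change $\Rightarrow$ dimension $n-1$'' is the nontrivial content of the dimension lemma; as written your proof silently assumes it rather than proving it. The step you flag as delicate (passing from $Q_1$ changes sign to $\tilde P$ changes sign) is in fact the easy part: the even-multiplicity factor vanishes only on a nowhere-dense set, so one can evaluate at nearby points of either sign.
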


\begin{prop} \label{pr2}
 Suppose polynomials $P$ and $Q$ enjoy the following properties:

\begin{enumerate}
\item \label{pr21} 
$Z(Q)  \sqsubset Z(P)$,
\item  \label{pr22}
If $Q_1$ is a non-constant divisor of $Q$, then $Q_1$ changes sign.
\end{enumerate}

Then $P$ is divisible by $Q$.
\end{prop}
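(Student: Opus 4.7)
The plan is to reduce Proposition \ref{pr2} to the Mateu--Orobitg--Verdera division lemma stated just above by handling one irreducible factor of $Q$ at a time; the main obstacle I expect is to ensure that each irreducible factor has a real zero set of positive $H^{n-1}$-measure, which is the real-algebraic content behind the sign-change hypothesis (\ref{pr22}).

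First I would use condition (\ref{pr22}) to show that $Q$ is square-free. Factor $Q = c\prod_{i=1}^m Q_i^{a_i}$ into distinct irreducibles in the UFD $\R[x_1,\dots,x_n]$; if some $a_i\ge 2$, then $Q_i^2$ is a non-constant divisor of $Q$ that is everywhere non-negative, violating (\ref{pr22}). Hence every $a_i=1$ and $Q = c\prod_{i=1}^m Q_i$ with distinct irreducible factors $Q_i$, each of which is itself a non-constant divisor of $Q$ and therefore changes sign.

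Next I would verify that $H^{n-1}(Z(Q_i))>0$ for each $i$. Fix points $p$, $q$ with $Q_i(p)>0$, $Q_i(q)<0$ and a small open ball $U\ni q$ on which $Q_i<0$. For each $q'\in U$ the one-variable polynomial $\phi_{q'}(t) = Q_i((1-t)p+tq')$ changes sign between $t=0$ and $t=1$, so it has a real root in $(0,1)$ of odd multiplicity. The condition that $\phi_{q'}$ has a multiple root is the vanishing of an algebraic function of $q'$ (its discriminant), and hence fails for generic $q'\in U$; picking a simple root $t_0$ of $\phi_{q'}$ produces a point $x_0 = (1-t_0)p+t_0q'\in Z(Q_i)$ at which $\nabla Q_i(x_0)\neq 0$. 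The smooth locus of $Z(Q_i)$ is therefore non-empty, so it contains a smooth $(n-1)$-dimensional piece of positive $H^{n-1}$-measure.

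Finally, condition (\ref{pr21}) combined with $Z(Q_i)\subset Z(Q)$ yields
\[
H^{n-1}(Z(Q_i)\cap Z(P)) = H^{n-1}(Z(Q_i)) > 0,
\]
so the Mateu--Orobitg--Verdera lemma gives $Q_i\mid P$. Since the $Q_i$'s are pairwise coprime irreducibles in the UFD $\R[x_1,\dots,x_n]$ and each divides $P$, their product $\prod_{i=1}^m Q_i = Q/c$ divides $P$, which is the desired divisibility $Q\mid P$.
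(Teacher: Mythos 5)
Your proof is correct, but it takes a genuinely different (and arguably cleaner) route than the paper's. The paper argues by induction on $\deg Q$: it peels off a single irreducible factor $Q_1$, applies the Mateu--Orobitg--Verdera lemma to get $Q_1\mid P$, and then spends effort verifying that the quotients $\tilde P = P/Q_1$ and $\tilde Q = Q/Q_1$ still satisfy hypotheses (\ref{pr21}) and (\ref{pr22}); the delicate step is showing $Z(\tilde Q)\sqsubset Z(\tilde P)$, which is done by another application of the division lemma to rule out $Q_1^2\mid Q$. You bypass the induction entirely: the observation that (\ref{pr22}) forces $Q$ to be square-free is immediate (since $Q_i^2\ge 0$), after which you apply the MOV lemma once per distinct irreducible factor and invoke pairwise coprimality in the UFD $\R[x_1,\dots,x_n]$ to conclude $\prod Q_i\mid P$. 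This shortcut avoids tracking how the hypotheses descend to quotients. You also supply your own sketch of the fact that a sign-changing polynomial has zero set of positive $H^{n-1}$-measure, whereas the paper simply cites the ``dimension lemma'' of \cite{MOV}. Your sketch is essentially sound, but the claim that the discriminant of $\phi_{q'}$ is not \emph{identically} zero in $q'$ is left implicit; it deserves a word (for instance: a sign-changing $Q_i$ that is $\R$-irreducible is also $\C$-irreducible, since otherwise $Q_i=cR\bar R$ would be of constant sign, and then the complex singular locus of $Z_\C(Q_i)$ has codimension at least two, so a generic real line through $p$ meets $Z_\C(Q_i)$ transversally). With that remark your argument is complete.
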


 \begin{proof}[Proof of Proposition \ref{pr1}] 
If $Q_1$ changes sign, then $H^{n-1}(Z(Q_1))>0$ (see also the dimension lemma in \cite[Chapter 5]{MOV}). We may therefore assume $Q_1 \geq 0$ and try to obtain a contradiction.  Let $Q=Q_1Q_2$ then clearly, $Q_2 Q =Q_1Q_2^2\geq 0$.  The degree of $Q_2$ is strictly less then the degree of $Q$. Since a homogeneous harmonic polynomial is orthogonal on sphere to any polynomial of smaller degree,  $ \int\limits_{S_r} Q_2(x) Q(x) d\sigma(x)= 0,$ where $S_r$ is the $(n-1)$-dimensional sphere with center $0$ and some radius $r$, $\sigma$ is the surface Lebesgue measure. Keeping in mind that $Q_2 Q \geq 0$ we obtain $Q_2 Q = 0$ a.e. on $S_r$. Since $r$ is an arbitrary positive number, $Q_2 Q \equiv 0$. We therefore have $Q_1 \equiv 0$ and a contradiction is obtained ($Q_1$ is a non-constant polynomial).
\end{proof}

 \begin{proof}[Proof of Proposition \ref{pr2}]
 If $P$ or $Q$ is a constant function, then the statement is trivial. We argue by induction on the degree of $Q$. Consider any irreducible non-constant divisor of $Q$ and denote it by $Q_1$. We know that $Z(Q_1) \subset Z(Q) \sqsubset Z(P)$ and that $H^{n-1} (Z(Q_1)) > 0$, hence  $H^{n-1}(Z(P)\cap Z(Q_1))>0$. Applying Division  Lemma (see above) to $P$ and $Q_1$, we see that $P$ is divisible by $Q_1$. Put $ \tilde P:=P/Q_1$ and $\tilde Q:= Q/Q_1$. It's clear that $\tilde Q$ enjoys the property (\ref{pr22}).

 Now, we want to show that $Z(\tilde Q)\sqsubset Z( \tilde P)$. Assume it is not true, i.e.,  $H^{n-1}(Z(\tilde Q)\setminus Z(\tilde P))>0$. Clearly $Z(P)=Z(\tilde P)\cup Z(Q_1)$,  and by the property (\ref{pr21}), $H^{n-1}(Z( Q)\setminus Z(  P))=0$.  Hence $H^{n-1}(Z(Q\setminus Q_1)\cap Z(  Q_1))>0$. Then by Lemma \ref{dl1}, $Q_1|(Q/Q_1)$ and $Q_1^2|Q$, which contradicts to the property (\ref{pr22}). 

 We see that $\tilde P$ and $\tilde Q$ enjoy the properties (\ref{pr21}) and (\ref{pr22}), since the degree of $\tilde Q$ is less than the degree of $Q$ we obtain $\tilde P=\tilde Q R$ and then $P=QR$.
\end{proof}
\begin{remark}
 A mind-reader might see that in Theorem \ref{t1} we can replace $Z(v) \subset Z(u)$ by $Z(v) \sqsubset Z(u)$:

 Suppose $u$ is a real-analytic function and $v$ is a harmonic function, both functions are defined in some domain $\Omega \subset \mathbb{R}^n$, $n\geq 2$. If $Z(v) \sqsubset Z(u)$, then there exist a real-analytic function $f$ in $\Omega$ such that $u=vf$.

\end{remark}
  Actually  $Z(v) \sqsubset Z(u)$ implies $Z(v) \subset Z(u)$ if $v$ is harmonic and $u$ is real analytic.
 The previous remark was done on purpose: 
 if we apply it several times we can obtain the following theorem.
\begin{theorem}
 Suppose $u$ is a real-analytic function and $v_1, v_2, \dots , v_k $ are harmonic functions, all functions are defined in some domain $\Omega \subset \mathbb{R}^n$, $n\geq 2$. If $Z(v_i) \subset Z(u)$ for any $i \in [1,k]$ and $H^{n-1}(Z(v_i)\cap Z(v_j))=0$ for $i \neq j$, then there exist a real-analytic function $f$ in $\Omega$ such that $u= f\prod\limits_{i=1}^{k}v_i $.
\end{theorem}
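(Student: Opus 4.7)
The plan is to argue by induction on the number of divisors $k$. The base case $k=1$ is exactly Theorem~\ref{t1}. For the inductive step, assuming the statement for $k-1$ divisors, I would first apply Theorem~\ref{t1} to $u$ and $v_k$ (the hypothesis $Z(v_k)\subset Z(u)$ is available) to obtain a real-analytic function $g:\Omega\to\R$ with $u=v_k g$. Once one verifies that $Z(v_i)\subset Z(g)$ for every $i<k$, the inductive hypothesis applied to $g$ together with $v_1,\dots,v_{k-1}$ (the disjointness condition $H^{n-1}(Z(v_i)\cap Z(v_j))=0$ is inherited automatically) produces a real-analytic $f$ with $g=f\prod_{i=1}^{k-1}v_i$, and multiplying through by $v_k$ completes the induction.

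Thus the entire argument reduces to proving the inclusions $Z(v_i)\subset Z(g)$ for $i<k$. I would first establish the weaker statement $Z(v_i)\sqsubset Z(g)$: pick any $x\in Z(v_i)$, so $u(x)=0$; since $u=v_k g$ this forces $v_k(x)g(x)=0$, hence $x\in Z(v_k)\cup Z(g)$. Consequently
\[
Z(v_i)\setminus Z(g)\subset Z(v_i)\cap Z(v_k),
\]
and the right-hand side has vanishing $(n-1)$-dimensional Hausdorff measure by hypothesis, giving $Z(v_i)\sqsubset Z(g)$.

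To upgrade $\sqsubset$ to honest pointwise containment I would invoke the observation recorded just after the remark preceding this theorem, namely that $Z(v)\sqsubset Z(h)$ implies $Z(v)\subset Z(h)$ whenever $v$ is harmonic and $h$ is real-analytic. The argument is by contradiction: a point $x_0\in Z(v_i)\setminus Z(g)$ would lie in a neighborhood $U$ on which $g$ is non-vanishing, so $Z(v_i)\cap U$ would be forced to have $H^{n-1}$-measure zero; but a nontrivial harmonic function has zero set of positive $(n-1)$-dimensional Hausdorff measure near any of its zeros, either by the implicit function theorem when $\nabla v_i(x_0)\neq 0$, or by Proposition~\ref{pr1} applied to the leading homogeneous harmonic polynomial of $v_i$ at $x_0$ when $x_0$ is a critical zero (invoking Lemma~\ref{l3} to compare $Z(v_i)$ with $Z$ of this leading term).

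I expect this last step, the harmonic/real-analytic promotion of $\sqsubset$ to $\subset$, to be the main obstacle; everything else is essentially structural bookkeeping on top of Theorem~\ref{t1}. Once that dichotomy is in place, the induction carries through immediately and yields the factorization $u=f\prod_{i=1}^k v_i$ with $f$ real-analytic on all of $\Omega$.
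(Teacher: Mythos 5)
Your argument follows the paper's intended route: iterate the local division theorem, using the pairwise $H^{n-1}$-null intersections $Z(v_i)\cap Z(v_j)$ to verify the $\sqsubset$-inclusion at each stage. The paper applies the $\sqsubset$ version of the division theorem (the Remark immediately preceding) directly, while you first promote $\sqsubset$ to $\subset$ and then apply Theorem~\ref{t1}; since that promotion is precisely the observation the paper records, the two routes are equivalent.

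One imprecision in your sketch of the promotion step is worth flagging. Proposition~\ref{pr1} gives $H^{n-1}(Z(Q))>0$ for the leading homogeneous harmonic polynomial $Q$ of $v_i$ at a critical zero $x_0$ --- a global statement about a homogeneous cone --- and Lemma~\ref{l3} compares leading terms of two different functions under a nodal-inclusion hypothesis; neither, as stated, yields the fact you actually need, namely that $Z(v_i)$ \emph{itself} has positive $H^{n-1}$-measure in every small neighborhood of $x_0$. That fact is true and is exactly what the paper's observation relies on, but the clean justification runs through the nodal-set structure the paper cites (\L{}ojasiewicz, \cite{L}, \cite{KP}): near $x_0$, $Z(v_i)=Z_0\cup Z_1$ with $Z_0=\{v_i=0,\,\nabla v_i\neq 0\}$ a smooth $(n-1)$-manifold and $Z_1$ of Hausdorff dimension at most $n-2$; since the leading harmonic term changes sign (Brelot--Choquet, as used in the proof of Lemma~\ref{l3}), $v_i$ changes sign in every small ball about $x_0$, so $Z(v_i)$ separates and must meet $Z_0$, which forces $H^{n-1}(Z(v_i)\cap U)>0$. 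With that substitution the proof is complete and matches the paper's.
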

\subsection{Real and Complex zeros of harmonic functions} Our results show that if harmonic functions $u$ and $v$ have the same zero set $Z$ in a ball $B\subset\R^n$, then there zero sets in $\C^n$  coincide at least at some complex neighborhood of a smaller real ball $b$. Moreover, if a real analytic function vanishes on the zero set of a harmonic function, then its complex zero set contains all complex zeroes of the harmonic function in some complex neighborhood.    If $n=2$ or $3$ Theorem \ref{th:full} implies that  this neighborhood can be chosen to depend on $Z$ only and not on $u$ and $v$, i.e. the real zeros of a harmonic function uniquely determine  its complex zeros in some complex neighborhood. It would be interesting to prove this directly and see if this neighborhood can be chosen to depend on $Z$ only in higher dimensions as well.  
\subsection{Compactness conjecture for harmonic functions with a fixed zero set.}

  It is a classical fact that a family of positive and harmonic in $B_1$ functions with value $1$ at $0$ is a normal family.
     Let $Z$ be any subset of $B_1$. Consider the set $F_Z$ of all harmonic in $B_1$ functions $u$ such that $Z(u)=Z$ and $u(0)=1$. We conjecture that  in any dimension $F_Z$ is a normal family. For the dimension $n=2,3$ it follows from the Harnack inequality for ratios of harmonic functions.

\end{document}